\newtheorem{theorem}{Theorem}
\newtheorem{lemma}[theorem]{Lemma}
\newtheorem{prop}[theorem]{Proposition}
\newtheorem{cor}[theorem]{Corollary}
\theoremstyle{definition}
\newtheorem{rem}[theorem]{Remark}
\newcommand{\Dif}{\textrm{Homeo}}
\newcommand{\M}{\mathcal{M}}
\newcommand{\T}{\mathcal{T}}
\newcommand{\Y}{\mathcal{Y}}
\newcommand{\calI}{\mathcal{I}}
\newcommand{\Z}{\mathbb{Z}}
\newcommand{\R}{\mathbb{R}}
\newcommand{\bdr}[1]{\partial\! #1}
\newcommand{\lr}[1]{\left<#1\right>}
\newcommand{\Sp}{\mathrm{Sp}}
\newcommand{\Stab}{\mathrm{Stab}}
\newcommand{\Iso}{\mathrm{Iso}}
\numberwithin{equation}{section}
\numberwithin{theorem}{section}
\author{B\l{}a\.zej Szepietowski}
\title[Crosscap slides and level 2 mapping class group]{Crosscap slides and the level 2  mapping class group of a nonorientable surface}
\address[]{Institute of Mathematics, Gda\'nsk University, Wita Stwosza 57,
80-952 Gda\'nsk, Poland} 
\email{blaszep@mat.ug.edu.pl}
\thanks{Supported by the MNiSW grant N N201 366436}
\begin{document}
\begin{abstract}
Crosscap slide is a homeomorphism of a nonorientable surface of genus at least 2, which was introduced under the name  Y-homeomorphism by Lickorish as an example of an element of the mapping class group which cannot be expressed as a product of Dehn twists. We prove that the subgroup of the mapping class group of  a closed nonorientable surface $N$ generated by all crosscap slides is equal to the level 2 subgroup consisting of those mapping classes which act trivially on  $H_1(N;\Z_2)$. We also prove that this subgroup is generated by involutions.
\end{abstract}

\maketitle

\section{Introduction}
It is well known that the mapping class group $\M(S)$ of a compact orientable surface $S$ is generated by Dehn twists. This result was originally proved by Dehn \cite{Dehn} and rediscovered  by Lickorish \cite{Lick0}. On a compact nonorientable surface $N$, however, there are elements of the mapping class group $\M(N)$ which cannot be expressed as a product of Dehn twists. The first example of such element is the crosscap slide, also called Y-homeomorphism, introduced by Lickorish \cite{Lick1}. He proved \cite{Lick1,Lick2} that if $N$ is a closed nonorientable surface of genus at least 2, then the twist subgroup $\T(N)$, generated by Dehn twists about all two-sided simple closed curves, has index 2 in $\M(N)$, and that $\M(N)$ is generated by Dehn twists and one crosscap slide. Later, Chillingworth \cite{Chill} found finite generating sets for $\M(N)$ and $\T(N)$. Korkmaz \cite{KorkHom,Kork} and Stukow \cite{Stu2,Stu3} extended Chillingworth's results to surfaces with punctures and boundary and also computed the abelianizations of $\M(N)$ and $\T(N)$. 

In this paper we consider the subgroup $\Y(N)$ of $\M(N)$ generated by all crosscap slides. 
Our main result is Theorem \ref{main}, which asserts that for a closed nonorientable surface $N$ of genus $g\ge 2$, $\Y(N)$ is equal to the level 2 subgroup $\Gamma_2(N)$ of $\M(N)$ consisting of those mapping classes which act trivially on $H_1(N;\Z_2)$. In particular $\Y(N)$ is a subgroup of finite index. We also prove that $\Y(N)$ is generated by involutions (Theorem \ref{invol}). 

It is an interesting problem to find a finite generating set for $\Y(N)$ and also compute its abelianization. For an orientable surface $S$, the abelianization of the level 2 subgroup of $\M(S)$  was computed by Sato \cite{Sato}.

The paper is organised as follows. After Preliminaries, we prove in Section \ref{s3} that certain elements of $\M(N)$ belong to $\Y(N)$ and that the last group is generated by involutions. In Section \ref{s4} we obtain convenient generating sets for the level $2$ mapping class groups of orientable surfaces with boundary. In Section \ref{s5} we use lemmas proved in previous sections, as well as some results of McCarthy-Pinkall \cite{McCP} and
Korkmaz \cite{KorkHom}, to prove our main result, which identifies the group $\Y(N)$ with the level 2 mapping class group $\Gamma_2(N)$. 

\medskip

\noindent{\bf Acknowledgment.} I was informed by Susumu Hirose that he found a shorter proof of my Theorem \ref{main} using Nowik's result \cite[Theorem 3.6]{Now}. I thank him for the discussion and for the reference \cite{Now}.
I also wish to thank the referee for helpful comments.

\section{Preliminaries}
Let $F$ be a connected compact surface, possibly with
boundary. Define $\Dif(F)$ to be the group of all, orientation
preserving if $F$ is orientable, homeomorphisms $h\colon F\to F$
equal to the identity on the boundary $\bdr{F}$. The {\it mapping class
group} $\M(F)$ is the group of isotopy classes in $\Dif(F)$. By
abuse of notation we will use the same symbol to denote a
homeomorphism and its isotopy class. If $f$ and $h$ are two
homeomorphisms, then the composition $fh$ means that $h$ is applied
first. 
A surface of genus $g$ with $n$ boundary components will be denoted by $S_{g,n}$ if it is orientable, or by $N_{g,n}$ if it is nonorientable. The genus of a nonorientable surface is the number of projective planes in the connected sum decomposition.

\subsection{Simple closed curves and Dehn twists.}
By a {\it simple closed curve} in $F$ we mean an embedding
$\gamma\colon S^1\to F\backslash\bdr{F}$. Note that $\gamma$ has an orientation; the
curve with the opposite orientation but same image will be denoted by
$\gamma^{-1}$. 
By abuse of notation, we will often identify a simple closed curve with its
oriented image and also with its isotopy class. 
According to whether a regular neighborhood of $\gamma$ is an annulus or a M\"obius strip, we call $\gamma$ respectively {\it two-} or {\it one-sided}. 
We say that $\gamma$ is {\it nonseparating} if $F\backslash\gamma$
is connected and {\it separating} otherwise. If $\gamma=\bdr{M}$ for some subsurface $M\subset F$ then we call $\gamma$ {\it bounding}. If $n\le 1$ then every separating curve is bounding.

Given a two-sided simple closed curve $\gamma$, $T_\gamma$ denotes a Dehn
twist about $\gamma$. On a  nonorientable surface it is
impossible to distinguish between right and left twists, so the
direction of a twist $T_\gamma$ has to be specified for each curve
$\gamma$. Equivalently we may choose an orientation of a regular
neighborhood of $\gamma$. Then $T_\gamma$ denotes the right Dehn twist with
respect to the chosen orientation. Unless we specify which of the
two twists we mean, $T_\gamma$ denotes any of the
two possible twists. Recall that $T_\gamma$ does not depend on the orientation of $\gamma$.

{\it Bounding pair} is a pair $(\gamma,\gamma')$ of disjoint %nonseparating 
simple closed curves, such that $\gamma\cup\gamma'=\bdr{S}$ for an orientable subsurface $S\subset F$. If $T_{\gamma}$, $T_{\gamma'}$ are right Dehn twists with respect to some orientation of $S$, then $T_{\gamma}T^{-1}_{\gamma'}$ is called {\it bounding pair map}. Our definition of a bounding pair differs slightly from the usual definition, in which the curves $\gamma$ and $\gamma'$ are required to be nonseparating.

\begin{figure}
\input{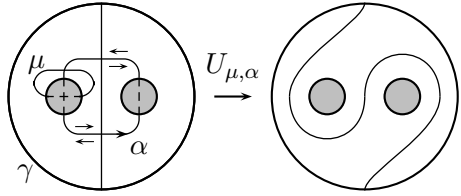}
\caption{\label{U} Crosscap transposition.}
\end{figure}

\begin{figure}
\input{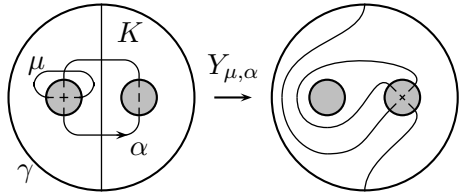}
\caption{\label{Y} Crosscap slide.}
\end{figure}

\subsection{Crosscap slides.}
We begin this subsection by describing a convention used in all figures in this paper. We explain this on the example of Figure \ref{U}. The shaded discs
represent crosscaps; this means that their interiors should be
removed, and then antipodal points in each resulting boundary
component should be identified. The small arrows on two sides of the curve   $\alpha$ indicate the direction of the Dehn twist $T_\alpha$.

Let $N=N_{g,n}$ be a nonorientable surface of genus $g\ge 2$.
Suppose that $\mu$ and $\alpha$ are two simple closed curves in $N$,
such that $\mu$ is one-sided, $\alpha$ is two-sided and they
intersect in one point. Let $K\subset N$ be a regular neighborhood of
$\mu\cup\alpha$, 
which is homeomorphic to the Klein bottle with a
hole. On Figure \ref{U} a homeomorphism of $K$ is shown, which interchanges the two crosscaps keeping the boundary of $K$ fixed. It may be extended 
by the identity outside $K$ to a homeomorphism of $N$, which
we call {\it crosscap
transposition} and denote as $U_{\mu,\alpha}$.
We define {\it crosscap slide} $Y_{\mu,\alpha}$ to be the composition
\[Y_{\mu,\alpha}=U_{\mu,\alpha}T_\alpha,\]
where $T_\alpha$ is the Dehn twist about $\alpha$ in the direction indicated by the arrows in Figure \ref{U}. If $M\subset K$ is a regular neighbourhood of $\mu$, which is a M\"obiu strip, then $Y_{\mu,\alpha}$ may be described as the effect of pushing $M$ once along $\alpha$ (Figure \ref{Y}).
Observe that $Y_{\mu,\alpha}$ reverses the orientation of $\mu$. 
 Up to isotopy, $Y_{\mu,\alpha}$ does not depend on the
choice of the regular neighbourhood $K$. It also does not depend on the orientation of $\mu$
but does depend on the orientation of $\alpha$. The following
properties of crosscap slides are easy to verify.
\begin{equation}\label{y^-1}
Y_{\mu,\alpha^{-1}}=Y^{-1}_{\mu,\alpha},
\end{equation}
\begin{equation}
Y_{\mu,\alpha}T_\alpha Y_{\mu,\alpha}^{-1}=U_{\mu,\alpha}T_\alpha U_{\mu,\alpha}^{-1}=T^{-1}_\alpha,
\end{equation}
\begin{equation}\label{y2}
Y_{\mu,\alpha}^2=U_{\mu,\alpha}^2=T_\gamma,
\end{equation}
where $T_\gamma$ is the Dehn twist about $\gamma=\bdr{K}$, right with respect
to the standard orientation of the plane of Figure \ref{Y},
\begin{equation}\label{hyh}
hY_{\mu,\alpha}h^{-1}=Y_{h(\mu),h(\alpha)},
\end{equation}
for all $h\in\Dif(N)$. 
The crosscap slide is an example of an element of $\M(N)$ which is not a product of Dehn twists. It was introduced under the name Y-homeomorphism by Lickorish, who proved that $\M(N_g)$ is generated by Dehn twists and one crosscap slide for $g\ge 2$ \cite{Lick1,Lick2}. 

\subsection{Blowup homomorphism and crosscap pushing map.}\label{blowup}
Let $N=N_{g,n}$ and fix $x_0\in N\backslash\bdr{N}$.
Define $\M(N,x_0)$ to be the group of isotopy classes in $\Dif(N,x_0)$, the group of all homeomorphisms $h\colon N\to N$ equal to the identity on $\bdr{N}$ and such that $h(x_0)=x_0$. 

Let $U=\{z\in\mathbb{C}\,|\,|z|\le 1\}$ and fix an embedding
$e\colon U\to N\backslash\bdr{N}$ such that $e(0)=x_0$.
The surface $N_{g+1,n}$ may be obtained by removing from $N$ the interior of $e(U)$ and then identifying $e(z)$ with $e(-z)$ for $z\in S^1=\bdr{U}$.
Let $\gamma=e(S^1)$ and let $\mu$ be the image of $\gamma$ in $N_{g+1,n}$.
Note that $\mu$ is a one-sided simple closed curve.

We define a {\it blowup homomorphism} 
\[\varphi\colon\M(N_{g,n},x_0)\to\M(N_{g+1,n})\] as follows. 
Represent $h\in\M(N,x_0)$ by a homeomorphism $h\colon N\to N$ such that (a) $h$ is equal to the identity on $e(U)$ or (b) $h(x)=e(\overline{e^{-1}(x)})$ for $x\in e(U)$. Such $h$ commutes with the identification leading to $N_{g+1,n}$ and thus induces an element $\varphi(h)\in\M(N_{g+1,n})$. To see that $\varphi$ is well defined consider two homeomorphisms $h_1,h_2\in\Dif(N,x_0)$ which satisfy (a) or (b) and are isotopic by an isotopy fixing $x_0$. Let $N'=N\backslash int(e(U))$. By \cite[Theorem 3.6]{Stu4} the kernel of the homomorphism $i_\ast\colon\M(N')\to\M(N,x_0)$ induced by the inclusion of $N'$ in $N$ is generated by $T_\gamma$. It follows that the restriction of $h_1h^{-1}_2$ to $N'$ is isotopic by an isotopy fixed on $\gamma$ to some power of $T_\gamma$. Since a Dehn twist about a curve bounding a M\"obius strip is trivial, $h_1$ and $h_2$ induce the same element of $\M(N_{g+1,n})$.

Forgetting the distinguished point $x_0$ induces a homomorphism
\[\M(N_{g,n},x_0)\to\M(N_{g,n}),\] which fits into the Birman exact sequence (see \cite{Kork})
\[\pi_1(N_{g,n},x_0)\stackrel{j}{\to}\M(N_{g,n},x_0)\to\M(N_{g,n})\to 1.\]
The homomorphism $j$ is called {\it point pushing map}.
If $\alpha$ is a loop in $N_{g,n}$ based at $x_0$ and $[\alpha]\in\pi_1(N_{g,n},x_0)$ is  its homotopy class, then $j([\alpha])$ may be described as the effect of pushing $x_0$ once along $\alpha$. 

We define a {\it crosscap pushing map} to be the composition 
\[\psi=\varphi\circ j\colon\pi_1(N_{g,n},x_0)\to\M(N_{g+1,n}).\] 
The following two lemmas follow from the description of the point pushing map for nonorientable surfaces \cite[Lemma 2.2 and Lemma 2.3]{Kork} and the definition of a crosscap slide.

\begin{lemma}\label{push2}
Suppose that $\alpha$ is a two-sided simple loop in $N_{g,n}$ based at $x_0$ and let $M$ be a regular neighbourhood of $\alpha$ containing $e(U)$. Let $M'$ be the image in $N_{g+1,n}$ of $M\backslash int(e(U))$.
Then \[\psi([\alpha])=T_\beta T_\delta^{-1},\]
where $\beta$ and $\delta$ are boundary curves of $M'$ and the twists are right with respect to some orientation of $M'\backslash\mu$.
\end{lemma}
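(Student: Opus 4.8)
The plan is to exploit the factorisation $\psi=\varphi\circ j$: first I would write down the point push $j([\alpha])$ using Korkmaz's description of the point pushing map, and then push the result forward through the blowup homomorphism $\varphi$. The definition of a crosscap slide is not actually needed for this lemma; that ingredient enters only in the companion lemma treating one-sided loops.

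Since $\alpha$ is a two-sided simple loop, the regular neighbourhood $M$ is an annulus; write $\partial M=\partial_1 M\cup\partial_2 M$. By \cite[Lemma 2.2]{Kork} the point push is the bounding pair map $j([\alpha])=T_{\partial_1 M}T_{\partial_2 M}^{-1}$, the two twists being right with respect to some orientation of the annulus $M$, equivalently of $M\setminus\mathrm{int}(e(U))$ since $e(U)\subset M$. To apply $\varphi$ I would choose a representative of this mapping class supported in $M\setminus\mathrm{int}(e(U))\subset N'$; in particular it is the identity on $e(U)$, so one is in case (a) of the definition of $\varphi$ and $\varphi(j([\alpha]))$ is defined. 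Because the chosen representatives of $T_{\partial_1 M}$ and $T_{\partial_2 M}$ are supported away from $e(U)$, they commute with the identification producing $N_{g+1,n}$ and descend to Dehn twists about the images of $\partial_1 M$ and $\partial_2 M$. Now $M\setminus\mathrm{int}(e(U))$ is a pair of pants with boundary $\partial_1 M\cup\partial_2 M\cup\gamma$, and under the quotient $N'\to N_{g+1,n}$ the component $\gamma$ is mapped onto the one-sided curve $\mu$ and hence ceases to be boundary; so the images $\beta,\delta$ of $\partial_1 M,\partial_2 M$ are precisely the two boundary curves of $M'$. Therefore $\psi([\alpha])=\varphi(T_{\partial_1 M})\varphi(T_{\partial_2 M})^{-1}=T_\beta T_\delta^{-1}$.

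It then remains to pin down the twist directions. The quotient map $N'\to N_{g+1,n}$ is injective away from $\gamma$, so it restricts to a homeomorphism of $M\setminus e(U)$ onto $M'\setminus\mu$; transporting along this homeomorphism the orientation used above gives an orientation of $M'\setminus\mu$ with respect to which $T_\beta$ and $T_\delta$ are right, as claimed. The whole argument is bookkeeping on top of \cite[Lemma 2.2]{Kork}; the one delicate point, and thus the main (though minor) obstacle, is checking that one lands in case (a) rather than case (b) of the blowup homomorphism, which is exactly where two-sidedness of $\alpha$ is used. For a one-sided loop the push flips $e(U)$, one lands in case (b), and the element obtained is a crosscap slide instead of a bounding pair map.
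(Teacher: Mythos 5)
Your proposal is correct and follows exactly the route the paper intends: the paper gives no written proof, merely asserting that the lemma ``follows from the description of the point pushing map for nonorientable surfaces [Kork, Lemmas 2.2--2.3]'', and your argument is a careful elaboration of that citation (Korkmaz's bounding-pair description of $j([\alpha])$ for a two-sided loop, followed by pushing forward through $\varphi$ in case (a)). Your observation that two-sidedness is precisely what puts you in case (a) of the blowup construction, and that the crosscap-slide definition is only needed for the one-sided companion lemma, is accurate.
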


\begin{lemma}\label{push1}
Suppose that $\alpha$ is a one-sided simple  loop in $N_{g,n}$ based at $x_0$.
Let $\alpha'$ be a simple loop in the homotopy class $[\alpha]$ which
intersects $\gamma$ in two antipodal points.
Then \[\psi([\alpha])=Y_{\mu,\tilde{\alpha}},\]
where $\tilde{\alpha}$ is the image in $N_{g+1,n}$ of $\alpha'\backslash int(e(U))$.
\end{lemma}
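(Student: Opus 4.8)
The plan is to unwind the definition $\psi=\varphi\circ j$ and reduce the statement to Korkmaz's description of the point pushing map \cite[Lemma 2.3]{Kork} together with the definition of the crosscap slide. First I would fix a convenient representative of $j([\alpha])$: choose $\alpha'$ in the class $[\alpha]$ as in the statement, meeting $\gamma=e(S^1)$ transversally in two antipodal points, so that $\alpha'\cap e(U)$ is a diameter of $e(U)$ and $\alpha'\setminus int(e(U))$ is a single arc, and let $V$ be a regular neighbourhood of $\alpha'$ containing $e(U)$; since $\alpha'$ is one-sided, $V$ is a M\"obius strip. Point pushing along $\alpha$ is realised by an ambient isotopy supported in $V$ that moves $x_0$ once around $\alpha'$; its time-one map $P\in\Dif(N,x_0)$ represents $j([\alpha])$ and drags $e(U)$ once around $\alpha'$. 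Because a loop running once along a one-sided curve reverses local orientation, $P$ maps $e(U)$ onto itself reversing orientation, and after a suitable isotopy supported in $V$ and fixing $x_0$ we may assume $P(e(z))=e(\bar z)$ for $z\in U$. Hence $P$ satisfies condition (b) in the definition of $\varphi$, and $\varphi(j([\alpha]))$ is the class of the homeomorphism of $N_{g+1,n}$ induced by $P$.

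Since $P$ is supported in $V$, the induced map is supported in the image $K$ of $V\setminus int(e(U))$ in $N_{g+1,n}$. The surface $V\setminus int(e(U))$ is a M\"obius strip with an open disk removed, and attaching a crosscap along the new boundary circle $\gamma$ turns it into $K\cong N_{2,1}$, a Klein bottle with a hole. Its curves $\mu$ (the image of $\gamma$) and $\tilde\alpha$ (the image of the arc $\alpha'\setminus int(e(U))$, which closes up to a simple closed curve precisely because the arc's endpoints on $\gamma$ are antipodal) are, respectively, a one-sided and a two-sided simple closed curve meeting in one point, and $K$ is a regular neighbourhood of $\mu\cup\tilde\alpha$ --- exactly the configuration appearing in the definition of $Y_{\mu,\tilde\alpha}$. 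Because $P$ dragged $e(U)$ once around $\alpha'$ keeping $\partial V$ fixed, the homeomorphism it induces on $K$ is, by construction, the result of sliding a M\"obius neighbourhood of $\mu$ once along $\tilde\alpha$ keeping $\partial K$ fixed, which is $Y_{\mu,\tilde\alpha}$; hence $\psi([\alpha])=Y_{\mu,\tilde\alpha}$.

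I expect the only genuine work to lie in the bookkeeping hidden in the phrases ``after a suitable isotopy'' and ``by construction''. One must verify that $\tilde\alpha$ is two-sided and meets $\mu$ exactly once, and one must match the direction in which $P$ transports $e(U)$ --- determined by the orientation of $\alpha$ --- with the orientation of $\tilde\alpha$ used to define $Y_{\mu,\tilde\alpha}$; the remaining ambiguity is exactly the relation $Y_{\mu,\tilde\alpha^{-1}}=Y^{-1}_{\mu,\tilde\alpha}$ of \eqref{y^-1}, together with a possible power of the boundary twist $T_{\partial K}$, which is controlled by $Y_{\mu,\tilde\alpha}^2=T_{\partial K}$ of \eqref{y2} and by the effect of the map on the properly embedded arc of Figure \ref{Y}. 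All of this is precisely what \cite[Lemma 2.3]{Kork} records about the point pushing map, so in a full write-up I would invoke that lemma for the point pushing computation and simply observe that $\varphi$ sends the resulting element to $Y_{\mu,\tilde\alpha}$ by the definition of the crosscap slide.
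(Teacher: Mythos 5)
Your proposal is correct and follows essentially the same route as the paper, which gives no separate argument but simply derives the lemma from Korkmaz's description of the point pushing map along a one-sided simple loop \cite[Lemma 2.3]{Kork} together with the definition of $\varphi$ and of the crosscap slide; your write-up is a faithful (and more detailed) unwinding of exactly those ingredients, including the correct observation that the push map along a one-sided loop realises condition (b) in the definition of $\varphi$ and that the orientation ambiguity is absorbed by \eqref{y^-1}.
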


\begin{cor}\label{pushinY}
For $n\le 1$ every element of $\psi(\pi_1(N_{g,n},x_0))$ is a product of crosscap slides.
\end{cor}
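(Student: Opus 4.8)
The plan is to deduce the corollary directly from Lemma~\ref{push1}, using that $\psi$ is a homomorphism. Since $\psi=\varphi\circ j$ is the composition of the homomorphisms $\varphi$ and $j$, it is itself a homomorphism $\pi_1(N_{g,n},x_0)\to\M(N_{g+1,n})$. Hence it suffices to produce a set of \emph{one-sided simple} loops based at $x_0$ that generates $\pi_1(N_{g,n},x_0)$: by Lemma~\ref{push1} each such loop is sent by $\psi$ to a crosscap slide, and then for an arbitrary $w\in\pi_1(N_{g,n},x_0)$, writing $w$ as a word in these generators and applying $\psi$ expresses $\psi(w)$ as a product of crosscap slides and their inverses. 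The inverses cause no trouble, since by~(\ref{y^-1}) we have $Y_{\mu,\alpha}^{-1}=Y_{\mu,\alpha^{-1}}$, which is again a crosscap slide.

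It remains to exhibit such generators, and this is the only place where the hypothesis $n\le1$ is needed. For $n=1$, the surface $N_{g,1}$ deformation retracts onto a wedge of $g$ one-sided simple closed curves $\mu_1,\dots,\mu_g$ meeting at $x_0$, so $\pi_1(N_{g,1},x_0)$ is free on the classes $[\mu_1],\dots,[\mu_g]$, each $\mu_i$ being a one-sided simple loop based at $x_0$. For $n=0$, one obtains $N_{g,1}$ by removing an open disc from $N_g$, and the inclusion $N_{g,1}\hookrightarrow N_g$ induces a surjection on fundamental groups, so the same curves $\mu_1,\dots,\mu_g$ generate $\pi_1(N_g,x_0)$ (now subject to the single relation $\mu_1^2\cdots\mu_g^2=1$). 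In either case $\pi_1(N_{g,n},x_0)$ is generated by one-sided simple loops based at $x_0$, which is what we need.

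Combining the two steps proves the corollary. I expect no serious obstacle: the argument is short, and its only genuinely geometric ingredient---that for $n\le1$ the group $\pi_1(N_{g,n},x_0)$ has a generating set consisting of one-sided simple loops through $x_0$---is classical. The one point that must be respected is that Lemma~\ref{push1} applies only to loops that are themselves simple, not merely to products of simple loops; this is exactly why one wants the generators above to be simple one-sided loops, and it also explains why the statement is restricted to $n\le1$, since for $n\ge2$ the natural generating sets of $\pi_1(N_{g,n},x_0)$ involve two-sided loops around the extra boundary components, which $\psi$ sends by Lemma~\ref{push2} to bounding pair maps rather than to crosscap slides.
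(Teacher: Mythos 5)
Your proposal is correct and follows exactly the paper's argument: since $\psi$ is a homomorphism and $\pi_1(N_{g,n},x_0)$ for $n\le 1$ is generated by classes of one-sided simple loops, Lemma~\ref{push1} shows the image is generated by crosscap slides. The extra details you supply (the explicit wedge of one-sided curves and the observation that inverses of crosscap slides are crosscap slides by~(\ref{y^-1})) are correct and merely make explicit what the paper leaves implicit.
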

\begin{proof}
For $n\le 1$,
$\pi_1(N_{g,n},x_0)$ is generated by homotopy classes of one-sided simple loops.
Since $\psi$ is a homomorphism, it follows by Lemma \ref{push1} that $\psi(\pi_1(N_{g,n},x_0))$ is generated by crosscap slides.
\end{proof}

\section{The group generated by crosscap slides.}\label{s3}
In this section we assume that $N=N_g$ is a closed nonorientable surface of genus $g\ge 2$. We denote by $\Y(N)$  the subgroup of $\M(N)$ generated by all crosscap slides.
Since by (\ref{hyh}) a conjugate of a crosscap slide is also a crosscap slide, $\Y(N)$ is a normal subgroup of $\M(N)$.

\begin{lemma}\label{square}
For every nonseparating two-sided simple closed curve $\alpha$ in $N$, $T^2_\alpha$ belongs to $\Y(N)$.
\end{lemma}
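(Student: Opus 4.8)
The plan is to exhibit $T_\alpha^2$ as a value of the crosscap pushing map $\psi$ applied to a suitable element of $\pi_1$, and then invoke Corollary \ref{pushinY}. First I would note that since $\alpha$ is nonseparating and two-sided, $N\backslash\alpha$ is connected and nonorientable (it has genus $g-2\ge 0$ with two boundary circles, unless one must check small cases separately). The idea is to realize the configuration of Lemma \ref{push2} in reverse: choose a point $x_0$ on $\alpha$ and view $\alpha$ as a two-sided simple loop based at $x_0$ in a surface $N_{g-1,0}$, so that blowing up at $x_0$ recovers $N=N_g$ together with a one-sided curve $\mu$ meeting $\alpha$. Concretely, I would pick a one-sided simple closed curve $\mu$ in $N$ that meets $\alpha$ transversely in a single point (such $\mu$ exists because $N$ is nonorientable and $\alpha$ is nonseparating), take $K$ a regular neighbourhood of $\mu\cup\alpha$ (a Klein bottle with one hole), and let $e(U)$ be a disc neighbourhood of the intersection point. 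Collapsing the crosscap $\mu$ — i.e. running the blowup construction of Section \ref{blowup} backwards — presents $N$ as $N_{g-1,0}$ blown up at $x_0$, with $\alpha$ descending to a two-sided simple loop $\bar\alpha$ based at $x_0$.

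The key computation is then Lemma \ref{push2}: for a regular neighbourhood $M\supset e(U)$ of $\bar\alpha$ we get $\psi([\bar\alpha]) = T_\beta T_\delta^{-1}$, where $\beta,\delta$ are the two boundary curves of $M' = $ image of $M\backslash\mathrm{int}(e(U))$ in $N$. The point is to choose $M$ so that $M'$ is exactly an annular neighbourhood of $\alpha$ with a crosscap removed — in fact $M'$ should be a Möbius-band-with-hole or, better, the subsurface $K$ itself — arranged so that the two boundary curves $\beta$ and $\delta$ are both isotopic to $\alpha$ (since $\alpha$ is two-sided, both components of the boundary of an annular neighbourhood of $\alpha$ are isotopic to $\alpha$). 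When $\beta$ and $\delta$ are both isotopic to $\alpha$ but the twists are taken with the \emph{same} orientation of $M'\backslash\mu$, the product $T_\beta T_\delta^{-1}$ need not be $T_\alpha^2$; rather I expect that going around $\bar\alpha$ once and using that passing through the crosscap $\mu$ reverses orientation, one of the two boundary twists becomes $T_\alpha$ and the other becomes $T_\alpha^{-1}$ \emph{of the opposite handedness}, i.e. $T_\alpha$ again, so that $\psi([\bar\alpha]) = T_\alpha^2$ (up to choosing the class $[\bar\alpha]$ or $[\bar\alpha]^{-1}$ and the twist direction). This is the step where I would be most careful. Once $T_\alpha^2 = \psi([\bar\alpha])$ is established, Corollary \ref{pushinY} (applied with $n=0\le 1$) gives immediately that $T_\alpha^2 \in \Y(N)$.

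The main obstacle I anticipate is the orientation bookkeeping in identifying $\psi([\bar\alpha])$ with $T_\alpha^2$ rather than with a bounding pair map $T_\beta T_\delta^{-1}$ on two genuinely distinct curves. The resolution should hinge on the fact that a loop based at $x_0$ which is the "double" of a one-sided path — passing through the crosscap twice — is homotopic to $\bar\alpha$, and that pushing $x_0$ along such a loop drags the crosscap around $\alpha$ with a sign flip, converting what would be $T_\beta T_\delta^{-1}$ into $T_\alpha^{\pm 2}$. Alternatively, and perhaps more cleanly, I would use the relation $Y_{\mu,\alpha}^2 = T_\gamma$ from \eqref{y2} together with the braid-type relations among crosscap slides and twists to reduce $T_\alpha^2$ to a product of two crosscap slides directly: writing $T_\alpha^2 = (T_\alpha Y_{\mu,\alpha})\,Y_{\mu,\alpha}^{-1}(T_\alpha Y_{\mu,\alpha}) Y_{\mu,\alpha}^{-1} \cdot (\text{correction})$ and simplifying using \eqref{hyh}, since $T_\alpha Y_{\mu,\alpha} = U_{\mu,\alpha}$ is a crosscap transposition and conjugates of $Y_{\mu,\alpha}$ are crosscap slides. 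Either route should work; I would lead with the $\psi$-argument since it is the most conceptual and ties directly to the lemmas already set up, keeping the explicit-relation computation as a fallback.
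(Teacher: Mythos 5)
Your starting point---a one-sided curve $\mu$ meeting $\alpha$ transversely in one point---is the same as the paper's, but neither of your two routes closes the argument, and the lead route rests on a false premise. If $\alpha$ is two-sided in $N$ and meets $\mu$ once, then blowing down the crosscap at $\mu$ sends $\alpha$ to a \emph{one-sided} loop $\bar\alpha$ based at $x_0$, not a two-sided one: this is exactly Lemma \ref{push1} read backwards (a one-sided loop downstairs is what produces a two-sided curve upstairs meeting $\mu$ once). Consequently $\psi([\bar\alpha])=Y_{\mu,\alpha}$ is a single crosscap slide, and $\psi([\bar\alpha]^2)=Y_{\mu,\alpha}^2=T_{\bdr{K}}$ by (\ref{y2}), the twist about the boundary of the Klein-bottle neighbourhood of $\mu\cup\alpha$---in no case $T_\alpha^2$. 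Lemma \ref{push2} simply does not apply to $\bar\alpha$; and where it does apply, the curves $\beta,\delta$ are disjoint from $\mu$ and in general non-isotopic to each other (that is precisely why $T_\beta T_\delta^{-1}$ is a nontrivial mapping class), so the hoped-for coincidence $T_\beta T_\delta^{-1}=T_\alpha^{\pm2}$ cannot be arranged. In short, $T_\alpha^2$ does not lie in the image of a single crosscap pushing map, so Corollary \ref{pushinY} cannot be invoked this way.

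Your fallback route is vacuous as written: $(T_\alpha Y_{\mu,\alpha})Y_{\mu,\alpha}^{-1}(T_\alpha Y_{\mu,\alpha})Y_{\mu,\alpha}^{-1}$ telescopes to $T_\alpha^2$ for free, but it expresses $T_\alpha^2$ as a product of crosscap \emph{transpositions} and inverse crosscap slides, and $U_{\mu,\alpha}=T_\alpha Y_{\mu,\alpha}$ is not known to lie in $\Y(N)$---indeed it does not, since crosscap slides act trivially on $H_1(N;\Z_2)$ while $T_\alpha$ does not, so $U_{\mu,\alpha}\in\Y(N)$ would force $T_\alpha\in\Y(N)$. The single missing idea is the relation $Y_{\mu,\alpha}T_\alpha Y_{\mu,\alpha}^{-1}=T_\alpha^{-1}$, which holds because $Y_{\mu,\alpha}$ maps $\alpha$ to itself while reversing the orientation of its annular neighbourhood. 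Granting it, $T_\alpha^2=T_\alpha\bigl(Y_{\mu,\alpha}T_\alpha^{-1}Y_{\mu,\alpha}^{-1}\bigr)=\bigl(T_\alpha Y_{\mu,\alpha}T_\alpha^{-1}\bigr)Y_{\mu,\alpha}^{-1}=Y_{T_\alpha(\mu),\alpha}\,Y_{\mu,\alpha^{-1}}$ by (\ref{hyh}) and (\ref{y^-1}), a product of two crosscap slides. That is the paper's one-line proof; your ``braid-type relations'' remark gestures at it but never identifies this relation, which is the entire content of the lemma.
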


\begin{proof} Let $\mu$ be a one-sided curve intersecting $\alpha$ in one point. 
Since $Y_{\mu,\alpha}$ preserves the curve $\alpha$ and reverses orientation of its neighbourhood, we have $Y_{\mu,\alpha}T_\alpha Y_{\mu,\alpha}^{-1}=T^{-1}_\alpha,$
and by (\ref{y^-1}, \ref{hyh}) 
\[T^2_\alpha=T_\alpha Y_{\mu,\alpha}T^{-1}_\alpha Y^{-1}_{\mu,\alpha}
=Y_{T_\alpha(\mu),\alpha}Y_{\mu,\alpha^{-1}}.\]
\end{proof}

\begin{figure}\label{lantern}
\input{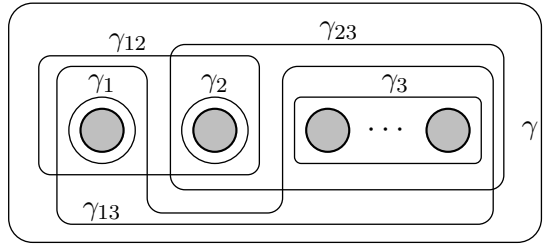}
\caption{Curves of the lantern relation.}
\end{figure}

\begin{lemma}\label{separating}
For every separating simple closed curve $\gamma$ in $N$, $T_\gamma$ belongs to $\Y(N)$.
\end{lemma}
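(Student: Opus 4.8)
\emph{Proof proposal.} The plan is to induct on the genus of a nonorientable subsurface bounded by $\gamma$, using the lantern relation among the curves of Figure \ref{lantern} to strip off two crosscaps at each step. To set up, cut $N$ along $\gamma$ to obtain two compact subsurfaces $F_1,F_2$, each having $\gamma$ as its only boundary curve. If both $F_1$ and $F_2$ were orientable then $N$ would be orientable, so at least one of them, say $F$, is nonorientable; let $k\ge 1$ be its genus. Throughout I use that membership of $T_\gamma$ in $\Y(N)$ does not depend on the direction chosen for $T_\gamma$, since $\Y(N)$ is a group; the same remark applies to every twist appearing below, which makes the orientation bookkeeping in the lantern relation harmless. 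I argue by induction on $k$.

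If $k=1$ then $F$ is a M\"obius band and $T_\gamma$ is trivial, since a Dehn twist about a curve bounding a M\"obius band is trivial. If $k=2$ then $F$ is a Klein bottle with one hole; picking a one-sided curve $\mu$ and a two-sided curve $\alpha$ in $F$ meeting in a single point, $F$ is a regular neighbourhood of $\mu\cup\alpha$, so by (\ref{y2}) one of the two twists $T_\gamma$ equals $Y_{\mu,\alpha}^2$, and hence $T_\gamma\in\Y(N)$.

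Now let $k\ge 3$. I decompose $F$ as a four-holed sphere $\Sigma$ with boundary curves $\gamma,\gamma_1,\gamma_2,\gamma_3$, together with M\"obius bands glued along $\gamma_1$ and $\gamma_2$ and a copy of $N_{k-2,1}$ glued along $\gamma_3$, as in Figure \ref{lantern}; let $\gamma_{12},\gamma_{13},\gamma_{23}\subset\Sigma$ be the lantern curves, where $\gamma_{ij}$ bounds in $\Sigma$ a three-holed sphere whose remaining two boundary curves are $\gamma_i$ and $\gamma_j$. Since $\Sigma$ is orientable, the lantern relation holds in $\M(N)$ and exhibits $T_\gamma$ as a product of powers of $T_{\gamma_1},T_{\gamma_2},T_{\gamma_3},T_{\gamma_{12}},T_{\gamma_{13}},T_{\gamma_{23}}$. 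Now $T_{\gamma_1}=T_{\gamma_2}=1$ because $\gamma_1,\gamma_2$ bound M\"obius bands; $\gamma_{12}$ bounds a Klein bottle with one hole (a three-holed sphere in $\Sigma$ with two of its boundary circles capped by M\"obius bands), so $T_{\gamma_{12}}\in\Y(N)$ by (\ref{y2}); $\gamma_3$ bounds $N_{k-2,1}$, nonorientable of genus $k-2<k$ (a M\"obius band if $k=3$), so $T_{\gamma_3}\in\Y(N)$ by the inductive hypothesis; and $\gamma_{13},\gamma_{23}$ each bound a copy of $N_{k-1,1}$ (one crosscap joined through $\Sigma$ to $N_{k-2,1}$), nonorientable of genus $k-1<k$, so $T_{\gamma_{13}},T_{\gamma_{23}}\in\Y(N)$ by the inductive hypothesis. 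Therefore $T_\gamma\in\Y(N)$, which closes the induction.

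The step I expect to require the most care is the inductive step: one must exhibit the decomposition of $F$ so that the three auxiliary curves $\gamma_3,\gamma_{13},\gamma_{23}$ really bound nonorientable subsurfaces of strictly smaller genus, which is what makes the induction terminate; this is precisely the ``peel off two crosscaps'' picture of Figure \ref{lantern}, and verifying the genera from that picture is routine. The apparent difficulty of tracking twist directions in the lantern relation is not genuine, since for membership in $\Y(N)$ only the cyclic subgroup generated by each twist matters.
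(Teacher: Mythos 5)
Your proof is correct and follows essentially the same route as the paper: induction on the genus of a nonorientable subsurface bounded by $\gamma$, with the Möbius band and Klein-bottle-with-hole base cases and the lantern relation of Figure \ref{lantern} peeling off two crosscaps at each step. Your added observations (that such a nonorientable side exists, and that twist directions are irrelevant for membership in the subgroup $\Y(N)$) are correct and only make explicit what the paper leaves implicit.
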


\begin{proof} Let $K$ be a nonorientable subsurface of $N$ such that $\gamma=\bdr{K}$. We will argue by induction on the genus $\rho$ of $K$. If $\rho=1$ then $T_\gamma$ is trivial, and if $\rho=2$ then $T_\gamma$ is the square of a crosscap slide supported in $K$ by (\ref{y2}). Suppose that $\rho\ge 3$ and consider the
separating curves in Figure \ref{lantern}. Note that $\gamma, \gamma_1, \gamma_2, \gamma_3, $ bound a four-holed sphere and we have the well known lantern relation
\[T_\gamma T_{\gamma_1}T_{\gamma_2}T_{\gamma_3}=T_{\gamma_{12}}T_{\gamma_{13}}T_{\gamma_{23}},\]
where the twists are right with respect to the standard orientation of the plane of the figure.
Since each of the curves different from $\gamma$ bounds a nonorientable subsurface of genus less then $\rho$, the corresponding twist is in $\Y(N)$ by inductive hypothesis. It follows that $T_\gamma\in\Y(N)$.\end{proof}

\begin{figure}
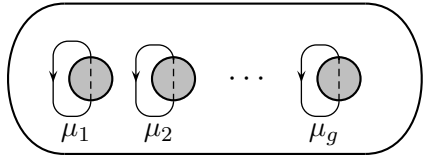

\pspicture*(6,2.5)
\psline(1,.25)(5,.25) \psline(1,2.25)(5,2.25)
\psbezier(1,.25)(0,.25)(0,2.25)(1,2.25)
\psbezier(5,.25)(6,.25)(6,2.25)(5,2.25)
\pscircle*[linecolor=lightgray](1.35,1.25){.3}
\pscircle(1.35,1.25){.3}
\pscircle*[linecolor=lightgray](2.45,1.25){.3}
\pscircle(2.45,1.25){.3}
\rput[c](3.45,1.25){$\cdots$}
%\pscircle(3.55,1.25){.3}
%
\pscircle*[linecolor=lightgray](4.65,1.25){.3}
\pscircle(4.65,1.25){.3}
\rput[t](1.15,.65){\small$\mu_1$}
\psline[linewidth=.5pt, arrowsize=2pt 2.5]{->}(.85,1.3)(.85,1.2)
\psline[linewidth=.5pt,linearc=.2](1.35,1.55)(1.35,1.75)(.85,1.75)(.85,.75)(1.35,.75)(1.35,.95)
\psline[linewidth=.5pt,linestyle=dashed,dash=3pt
2pt](1.35,.95)(1.35,1.55)
\rput[t](2.25,.65){\small$\mu_2$}
\psline[linewidth=.5pt, arrowsize=2pt 2.5]{->}(1.95,1.3)(1.95,1.2)
\psline[linewidth=.5pt,linearc=.2](2.45,1.55)(2.45,1.7)(1.95,1.7)(1.95,.75)(2.45,.75)(2.45,.95)
\psline[linewidth=.5pt,linestyle=dashed,dash=3pt
2pt](2.45,.95)(2.45,1.55)
\psline[linewidth=.5pt, arrowsize=2pt 2.5]{->}(4.15,1.3)(4.15,1.2)
\rput[t](4.45,.65){\small$\mu_g$}
\psline[linewidth=.5pt,linearc=.2](4.65,1.55)(4.65,1.7)(4.15,1.7)(4.15,.75)(4.65,.75)(4.65,.95)
\psline[linewidth=.5pt,linestyle=dashed,dash=3pt
2pt](4.65,.95)(4.65,1.55)
%

%\rput[t](1.5,.65){\small$a_1$}
%\psline[linewidth=.5pt,linearc=.2](1,1.55)(1,1.7)(2,1.7)(2,1.55)
%\psline[linewidth=.5pt,linearc=.2](1,.95)(1,.75)(2,.75)(2,.95)
%\psline[linewidth=.5pt,linestyle=dashed,dash=3pt 2pt](1,.95)(1,1.55)
%\psline[linewidth=.5pt,linestyle=dashed,dash=3pt 2pt](2,.95)(2,1.55)
%
\endpspicture
\caption{\label{mui}The one-sided curves $\mu_i$.}
\end{figure}

\begin{figure}
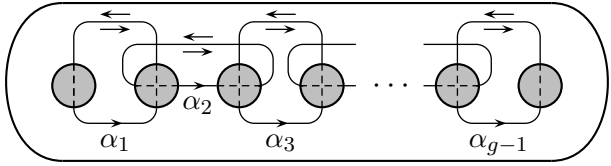

\pspicture*(8.5,2.5)
\psline(1,.25)(7.5,.25) \psline(1,2.35)(7.5,2.35)
\psbezier(1,.25)(0,.25)(0,2.35)(1,2.35)
\psbezier(7.5,.25)(8.5,.25)(8.5,2.35)(7.5,2.35)
\pscircle*[linecolor=lightgray](1.15,1.25){.3}
\pscircle(1.15,1.25){.3}
\pscircle*[linecolor=lightgray](2.25,1.25){.3}
\pscircle(2.25,1.25){.3}
\pscircle*[linecolor=lightgray](3.35,1.25){.3}
\pscircle(3.35,1.25){.3}
\pscircle*[linecolor=lightgray](4.45,1.25){.3}
\pscircle(4.45,1.25){.3}
\pscircle*[linecolor=lightgray](6.25,1.25){.3}
\pscircle(6.25,1.25){.3}
\pscircle*[linecolor=lightgray](7.35,1.25){.3}
\pscircle(7.35,1.25){.3}
\psline[linewidth=.5pt, arrowsize=2pt 2.5]{->}(1.7,.75)(1.8,.75)
\rput[t](1.7,.6){\small$\alpha_1$}
\psline[linewidth=.5pt,linearc=.2](1.15,1.55)(1.15,2.1)(2.25,2.1)(2.25,1.55)
\psline[linewidth=.5pt,linearc=.2](1.15,.95)(1.15,.75)(2.25,.75)(2.25,.95)
\psline[linewidth=.5pt,linestyle=dashed,dash=3pt
2pt](1.15,.95)(1.15,1.55)
\psline[linewidth=.5pt, arrowsize=2pt 2.5]{<-}(1.5,2.2)(1.9,2.2)
\psline[linewidth=.5pt, arrowsize=2pt 2.5]{->}(1.5,2)(1.9,2)
\psline[linewidth=.5pt, arrowsize=2pt 2.5]{->}(3.9,.75)(4,.75)
\rput[t](3.9,.6){\small$\alpha_3$}
\psline[linewidth=.5pt,linearc=.2](3.35,1.55)(3.35,2.1)(4.45,2.1)(4.45,1.55)
\psline[linewidth=.5pt,linearc=.2](3.35,.95)(3.35,.75)(4.45,.75)(4.45,.95)
\psline[linewidth=.5pt,linestyle=dashed,dash=3pt
2pt](1.15,.95)(1.15,1.55)
\psline[linewidth=.5pt,linestyle=dashed,dash=3pt
2pt](2.25,.95)(2.25,1.55)
\psline[linewidth=.5pt,linestyle=dashed,dash=3pt
2pt](3.35,.95)(3.35,1.55)
\psline[linewidth=.5pt, arrowsize=2pt 2.5]{<-}(3.7,2.2)(4.1,2.2)
\psline[linewidth=.5pt, arrowsize=2pt 2.5]{->}(3.7,2)(4.1,2)
\psline[linewidth=.5pt, arrowsize=2pt 2.5]{->}(2.8,1.25)(2.9,1.25)
\rput[t](2.8,1.1){\small$\alpha_2$}
\psline[linewidth=.5pt,linearc=.2](1.95,1.25)(1.8,1.25)(1.8,1.8)(3.8,1.8)(3.8,1.25)(3.65,1.25)
\psline[linewidth=.5pt,linestyle=dashed,dash=3pt
2pt](1.95,1.25)(2.55,1.25)
\psline[linewidth=.5pt](2.55,1.25)(3.05,1.25)
\psline[linewidth=.5pt,linestyle=dashed,dash=3pt
2pt](3.05,1.25)(3.65,1.25)
\psline[linewidth=.5pt, arrowsize=2pt 2.5]{<-}(2.6,1.9)(3,1.9)
\psline[linewidth=.5pt, arrowsize=2pt 2.5]{->}(2.6,1.7)(3,1.7)
\psline[linewidth=.5pt,linestyle=dashed,dash=3pt
2pt](4.45,.95)(4.45,1.55)
\psline[linewidth=.5pt, arrowsize=2pt 2.5]{->}(6.8,.75)(6.9,.75)
\rput[t](6.8,.6){\small$\alpha_{g-1}$}
\psline[linewidth=.5pt,linearc=.2](6.25,1.55)(6.25,2.1)(7.35,2.1)(7.35,1.55)
\psline[linewidth=.5pt,linearc=.2](6.25,.95)(6.25,.75)(7.35,.75)(7.35,.95)
%\psline[linewidth=.5pt,linestyle=dashed,dash=3pt
%2pt](2.25,.95)(2.25,1.55)
\psline[linewidth=.5pt,linestyle=dashed,dash=3pt
2pt](6.25,.95)(6.25,1.55)
\psline[linewidth=.5pt,linestyle=dashed,dash=3pt
2pt](7.35,.95)(7.35,1.55)
\psline[linewidth=.5pt, arrowsize=2pt 2.5]{<-}(6.6,2.2)(7,2.2)
\psline[linewidth=.5pt, arrowsize=2pt 2.5]{->}(6.6,2)(7,2)
\psline[linewidth=.5pt,linearc=.2](4.15,1.25)(4,1.25)(4,1.8)(4.9,1.8)
\psline[linewidth=.5pt,linestyle=dashed,dash=3pt
2pt](4.15,1.25)(4.75,1.25)
\psline[linewidth=.5pt](4.75,1.25)(4.9,1.25)
\rput[c](5.4,1.25){$\cdots$}
\psline[linewidth=.5pt,linearc=.2](6.55,1.25)(6.7,1.25)(6.7,1.8)(5.8,1.8)
\psline[linewidth=.5pt,linestyle=dashed,dash=3pt
2pt](5.95,1.25)(6.55,1.25)
\psline[linewidth=.5pt](5.8,1.25)(5.95,1.25)
%\rput[t](1.5,.65){\small$a_1$}
%\psline[linewidth=.5pt,linearc=.2](1,1.55)(1,1.7)(2,1.7)(2,1.55)
%\psline[linewidth=.5pt,linearc=.2](1,.95)(1,.75)(2,.75)(2,.95)
%\psline[linewidth=.5pt,linestyle=dashed,dash=3pt 2pt](1,.95)(1,1.55)
%\psline[linewidth=.5pt,linestyle=dashed,dash=3pt 2pt](2,.95)(2,1.55)
%
\endpspicture
\caption{\label{alphai}The two-sided curves $\alpha_i$.}
\end{figure}

\begin{figure}
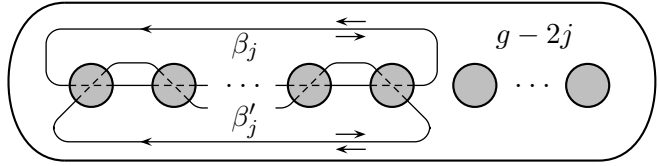

\pspicture*(9.1,2.5)
\psline(1,.25)(8.1,.25) \psline(1,2.35)(8.1,2.35)
\psbezier(1,.25)(0,.25)(0,2.35)(1,2.35)
\psbezier(8.1,.25)(9.1,.25)(9.1,2.35)(8.1,2.35) %
\pscircle*[linecolor=lightgray](1.35,1.25){.3}
\pscircle(1.35,1.25){.3}
\pscircle*[linecolor=lightgray](2.45,1.25){.3}
\pscircle(2.45,1.25){.3}
\pscircle*[linecolor=lightgray](4.25,1.25){.3}
\pscircle(4.25,1.25){.3}
\pscircle*[linecolor=lightgray](5.35,1.25){.3}
\pscircle(5.35,1.25){.3}
\pscircle*[linecolor=lightgray](6.45,1.25){.3}
\pscircle(6.45,1.25){.3}
\rput[c](7.25,1.25){$\dots$}
\rput[c](7.25,1.9){\small{$g-2j$}}	
\pscircle*[linecolor=lightgray](7.95,1.25){.3}
\pscircle(7.95,1.25){.3}
\psline[linewidth=.5pt, arrowsize=2pt 2.5]{->}(4.6,.6)(5,.6)
\psline[linewidth=.5pt, arrowsize=2pt 2.5]{<-}(4.6,.4)(5,.4)
\psline[linewidth=.5pt, arrowsize=2pt 2.5]{->}(2.1,.5)(2,.5)
\rput[b](3.4,.55){\small{$\beta'_j$}}
\psline[linewidth=.5pt,linestyle=dashed,dash=3pt
2pt](1.55,1.45)(1.15,1.05)
\psline[linewidth=.5pt,linearc=.2](1.55,1.45)(1.65,1.55)(2.15,1.55)(2.25,1.45)
\psline[linewidth=.5pt,linestyle=dashed,dash=3pt
2pt](2.25,1.45)(2.65,1.05)
\psline[linewidth=.5pt,linearc=.2](2.65,1.05)(2.75,.95)(2.9,.95)
\psline[linewidth=.5pt,linearc=.2](4.05,1.05)(3.95,.95)(3.8,.95)
\psline[linewidth=.5pt,linestyle=dashed,dash=3pt
2pt](4.05,1.05)(4.45,1.45)
\psline[linewidth=.5pt,linearc=.2](4.45,1.45)(4.55,1.55)(5.05,1.55)(5.15,1.45)
\psline[linewidth=.5pt,linestyle=dashed,dash=3pt
2pt](5.55,1.05)(5.15,1.45)
\psline[linewidth=.5pt,linearc=.2](1.15,1.05)(.85,.75)(.85,.5)(5.85,.5)(5.85,.75)(5.55,1.05)
\rput[c](3.4,1.25){$\dots$}
\psline[linewidth=.5pt, arrowsize=2pt 2.5]{<-}(4.6,2.1)(5,2.1)
\psline[linewidth=.5pt, arrowsize=2pt 2.5]{->}(4.6,1.9)(5,1.9)
\psline[linewidth=.5pt, arrowsize=2pt 2.5]{->}(2.1,2)(2,2)
\rput[t](3.4,1.95){\small{$\beta_j$}}
\psline[linewidth=.5pt,linestyle=dashed,dash=3pt
2pt](1.05,1.25)(1.65,1.25)
\psline[linewidth=.5pt](1.65,1.25)(2.15,1.25)
\psline[linewidth=.5pt,linestyle=dashed,dash=3pt
2pt](2.15,1.25)(2.75,1.25)
\psline[linewidth=.5pt](2.75,1.25)(2.9,1.25)
\psline[linewidth=.5pt](3.8,1.25)(3.95,1.25)
\psline[linewidth=.5pt,linestyle=dashed,dash=3pt
2pt](3.95,1.25)(4.55,1.25)
\psline[linewidth=.5pt](4.55,1.25)(5.05,1.25)
\psline[linewidth=.5pt,linestyle=dashed,dash=3pt
2pt](5.05,1.25)(5.65,1.25)
\psline[linewidth=.5pt,linearc=.2](1.05,1.25)(.75,1.25)(.75,2)(5.95,2)(5.95,1.25)(5.65,1.25)
%
%\rput[b](3.6,2){\small$\varepsilon$}
%

%
\endpspicture
\caption{\label{betaj}The pair $(\beta_j, \beta'_j)$ bounding an orientable subsurface of genus $j-1$.}
\end{figure}

Represent the surface $N=N_g$ ($g\ge 2$) as a 2-sphere with $g$ crosscaps (Figure \ref{mui}). Recall that it means that the interiors of the shaded disc should be removed and then antipodal points on each resulting boundary component should be identified. Assume that the sphere is embedded in $\R^3$ is such a way that it is invariant under the reflection about a plane which contains centers of 
the shaded discs. The reflection commutes with the identification leading to $N$, so it induces a homeomorphism  $R\colon N\to N$ of order 2.

For $1\le i\le g-1$ let 
\[U_i=U_{\mu_i,\alpha_i},\qquad Y_i=Y_{\mu_i,\alpha_i}=U_{\mu_i,\alpha_i}T_{\alpha_i},\]
where $\mu_i$ is the one sided curve in Figure \ref{mui} and  
$T_{\alpha_i}$ is the Dehn twist about $\alpha_i$ in Figure \ref{alphai} in the indicated direction.  For $1\le j\le\lfloor g/2\rfloor$ let $T_{\beta_j}$, $T_{\beta'_j}$ be Dehn twists about the curves in Figure \ref{betaj} in the indicated directions. Observe that $\beta_j$ and $\beta'_j$ bound an orientable subsurface of genus $j-1$ and $T_{\beta_j}$, $T_{\beta'_j}$ are right twists with respect to  appropriate orientation of this subsurface, hence $T_{\beta_j}T^{-1}_{\beta'_j}$ is a bounding pair map.   

\begin{theorem}[Chillingworth \cite{Chill}]{\label{Ch}}
The group $\M(N_g)$ is generated by
\[\{T_{\alpha_i}, T_{\beta_j}, Y_{g-1}; 1\le i\le g-1, 1\le j\le\lfloor g/2\rfloor\}.\]
\end{theorem}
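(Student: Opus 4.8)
The plan is to take Lickorish's results as given and then reduce an arbitrary Dehn twist to the listed generators by the standard ``change of coordinates'' argument. Write $G\le\M(N_g)$ for the subgroup generated by the set in the statement. By Lickorish \cite{Lick1,Lick2}, $\M(N_g)$ is generated by the Dehn twists about all two-sided simple closed curves together with one crosscap slide, and the twist subgroup $\T(N_g)$ has index $2$ in $\M(N_g)$. Since $Y_{g-1}\in G$ and $Y_{g-1}\notin\T(N_g)$, it suffices to prove that $G$ contains $T_\gamma$ for every two-sided simple closed curve $\gamma$; then $G\supseteq\T(N_g)$, and $G=\M(N_g)$ because $[\M(N_g):\T(N_g)]=2$.

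I would first handle two-sided nonseparating curves. Cutting $N_g$ along such a curve $\gamma$ gives a connected surface with two boundary circles which is either nonorientable or---only when $g$ is even---orientable of genus $(g-2)/2$; hence there are at most two topological types of such $\gamma$, each realised by one of the $\alpha_i$ or one of the $\beta_j$. Using the relation $T_{f(\delta)}=fT_\delta f^{-1}$ for $f\in\Dif(N_g)$, together with the fact that any two such curves of a fixed type can be joined by a chain of curves in which consecutive ones meet transversely in a single point (at each step one invokes the braid relation $T_\delta T_{\delta'}T_\delta=T_{\delta'}T_\delta T_{\delta'}$, which makes $T_\delta$ and $T_{\delta'}$ conjugate inside $\langle T_\delta,T_{\delta'}\rangle$), one proves by induction on the geometric intersection number---Lickorish's argument---that every such $T_\gamma$ lies in the subgroup generated by the $T_{\alpha_i}$ and $T_{\beta_j}$, hence in $G$. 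In particular $G$ contains every $T_{\beta'_j}$, and the bounding pair maps $T_{\beta_j}T^{-1}_{\beta'_j}$.

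Next I would treat a separating curve $\gamma$, which bounds some nonorientable subsurface $K$ of genus $\rho$, by the lantern-relation induction on $\rho$ used in the proof of Lemma \ref{separating}. For $\rho=1$ the twist $T_\gamma$ is trivial; for $\rho=2$ (so that $K$ is a Klein bottle with a hole) it equals $Y^2$ for a crosscap slide $Y$ supported in $K$ by (\ref{y2}), and $Y=hY_{g-1}h^{-1}$ where $h\in\Dif(N_g)$ carries the standard pair $(\mu_{g-1},\alpha_{g-1})$ to the one-sided/two-sided pair defining $Y$; by a change-of-coordinates argument for such pairs of curves this $h$ exists and can be chosen to be a product of Dehn twists about two-sided nonseparating curves, hence to lie in $G$, so that $T_\gamma=Y^2\in G$. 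For $\rho\ge 3$ the lantern relation of Figure \ref{lantern} writes $T_\gamma$ as a product of twists about curves bounding subsurfaces of genus $<\rho$, which belong to $G$ by induction. This shows $\T(N_g)\le G$, and hence $G=\M(N_g)$.

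I expect the change-of-coordinates step to be the main obstacle. Concretely, one must show that the subgroup generated by the Dehn twists about two-sided nonseparating curves acts transitively on the relevant configurations---the curves of each topological type, and the one-sided/two-sided pairs underlying crosscap slides---so that all the conjugating homeomorphisms that occur in the two previous steps can genuinely be expressed through the $T_{\alpha_i}$ and $T_{\beta_j}$, and hence lie in $G$. This transitivity, together with the connectivity of the associated curve graphs on the nonorientable surface, is essentially the content of Chillingworth's original argument in \cite{Chill}; alternatively, one could attempt an induction on the genus $g$ using the Birman exact sequence and the blowup homomorphism of Section \ref{blowup}.
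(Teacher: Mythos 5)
There is no internal proof to compare against here: the paper states this result as a quotation of Chillingworth \cite{Chill} and gives no argument, so I am assessing your sketch on its own terms. As an outline it is reasonable (reduce to $\T(N_g)\le G$ via Lickorish, handle nonseparating twists by change of coordinates, handle separating twists by the lantern induction), but it is not a proof: the entire content of the theorem is the claim that the \emph{specific} twists $T_{\alpha_i}$, $T_{\beta_j}$ generate enough to move an arbitrary two-sided nonseparating curve onto one of the model curves, and you defer exactly this step to \cite{Chill}, i.e.\ to the statement being proved. Note also that the braid-relation observation does not do the work you want: knowing that $T_\delta$ and $T_{\delta'}$ are conjugate inside $\langle T_\delta,T_{\delta'}\rangle$ when $i(\delta,\delta')=1$ does not show $T_{\delta'}\in G$ given $T_\delta\in G$, because the conjugating element involves $T_{\delta'}$ itself. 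What is needed is transitivity of $\langle T_{\alpha_i},T_{\beta_j}\rangle$ (not of the full nonseparating-twist subgroup, as you write at the end) on curves of each topological type, established by the Lickorish-style induction on geometric intersection number carried out with only the listed twists; this is the part that must actually be written down.

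There is also a concrete error in your base case $\rho=2$ for separating curves. If $\gamma=\bdr{K}$ with $K$ a Klein bottle with a hole and $g$ even, $g\ge 4$, the complement $N\backslash K$ may be orientable, and then no homeomorphism $h$ of $N$ carries $(\mu_{g-1},\alpha_{g-1})$ to the pair $(\mu,\alpha)$ defining the crosscap slide $Y$ supported in $K$: such an $h$ would take the complement of a regular neighbourhood of $\mu_{g-1}\cup\alpha_{g-1}$, which is nonorientable for $g\ge 3$, to the orientable surface $N\backslash K$. So $Y$ is not conjugate to $Y_{g-1}$ at all, let alone by an element of $G$, and the claimed identity $T_\gamma=hY_{g-1}^2h^{-1}$ fails. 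This is precisely the case the paper must treat separately in Lemma \ref{y1}, where such a crosscap slide is factored, via the crosscap pushing map, as a product of three crosscap slides each conjugate to $Y_1$; your argument needs that factorisation (followed by $Y^2=T_\gamma$) to close the induction. With the transitivity step actually carried out and this case repaired, the rest of your reduction goes through.
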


\begin{lemma}\label{RinY}
The involution $R$ belongs to $\Y(N)$.
\end{lemma}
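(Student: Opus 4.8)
The plan is to prove the statement by induction on the genus $g$. It is convenient to set things up so that for every $g\ge 1$ the surface $N_g$ is the sphere with $g$ crosscaps whose centres lie on a fixed plane $P\subset\R^3$, and $R=R_g\in\M(N_g)$ is the involution induced by the reflection of $\R^3$ in $P$. Since $\M(N_1)=\M(\R P^2)$ is trivial we have $R_1=1$, which starts the induction. (For the reader who prefers to begin at $g=2$: $R$ fixes each $\mu_i$ setwise and hence acts trivially on $H_1(N_2;\Z_2)$, so $R$ lies in the two-element subgroup of $\M(N_2)\cong\Z_2\oplus\Z_2$ acting trivially on $\Z_2$-homology, and that subgroup is contained in $\Y(N_2)$.)

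For the inductive step fix $g\ge 2$ and assume $R_{g-1}\in\Y(N_{g-1})$ (with $R_1=1$ when $g=2$). The $g$-th crosscap carries the one-sided curve $\mu_g$, and $R_g$ preserves a M\"obius-band neighbourhood of $\mu_g$, acting on it as the blowup of a reflection of a disc. Taking the point $x_0$ of Section \ref{blowup} to be the point into which $\mu_g$ is blown down, one checks that $R_g=\varphi(\widetilde R)$ for a suitable $\widetilde R\in\M(N_{g-1},x_0)$, and that the forgetful homomorphism $\M(N_{g-1},x_0)\to\M(N_{g-1})$ sends $\widetilde R$ to the lower-genus reflection $R_{g-1}$. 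The delicate point is that $R_g$ is the blowup of $\widetilde R$ itself and not of $\widetilde R$ composed with a point pushing map; this is exactly the indeterminacy handled in the proof that $\varphi$ is well defined, namely a power of a Dehn twist about a curve bounding a M\"obius strip, which is trivial.

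By the inductive hypothesis and (\ref{y^-1}) we can write $R_{g-1}=Y_1\cdots Y_m$ with each $Y_k$ a crosscap slide in $N_{g-1}$ (the empty product when $g=2$). Each $Y_k$ has a representative supported in a Klein bottle with a hole disjoint from $x_0$, which lifts to $Y_k'\in\M(N_{g-1},x_0)$ equal to the identity near $x_0$, so that $\varphi(Y_k')$ is the crosscap slide $Y_k$ viewed inside $N_g$ and disjoint from $\mu_g$. Setting $P=Y_1'\cdots Y_m'$, we obtain $\varphi(P)\in\Y(N_g)$, while $P$ also maps to $R_{g-1}$ under the forgetful homomorphism. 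Hence $\widetilde R P^{-1}$ lies in the kernel of that homomorphism, which by the Birman exact sequence equals $j(\pi_1(N_{g-1},x_0))$; writing $\widetilde R=j([\ell])\,P$ and applying $\varphi$ gives, using $\psi=\varphi\circ j$,
\[R_g=\psi([\ell])\,\varphi(P).\]
Since $N_{g-1}$ is closed, Corollary \ref{pushinY} shows that $\psi([\ell])$ is a product of crosscap slides, and $\varphi(P)$ is such a product by construction; therefore $R_g\in\Y(N_g)$, which completes the induction.

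The main obstacle is the identification carried out in the second paragraph: showing that the reflection $R_g$ lies in the image of the blowup homomorphism and that the preimage in $\M(N_{g-1},x_0)$ descends precisely to $R_{g-1}$ (and not to a push of it). After that step everything is formal, using only the Birman exact sequence, Corollary \ref{pushinY}, and the fact — already exploited there — that the fundamental group of a closed nonorientable surface is generated by one-sided simple loops.
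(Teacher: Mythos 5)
Your argument is correct, but it is a genuinely different proof from the one in the paper. The paper proceeds algebraically: it forms $F=RU_{g-1}\cdots U_1T_{\alpha_1}\cdots T_{\alpha_{g-1}}$, checks that $F$ commutes with Chillingworth's generators, invokes the triviality of the center of $\M(N_g)$ to conclude $F=1$, and then rewrites $R=U_{g-1}\cdots U_1T_{\alpha_1}\cdots T_{\alpha_{g-1}}$ explicitly as a product of $g-1$ conjugates of the $Y_i$. You instead induct on the genus through the blowup homomorphism: the reflection of $N_g$ visibly restricts to the last crosscap as the blowup of the conjugation map $z\mapsto\bar z$ on $e(U)$ (condition (b) in the definition of $\varphi$), so $R_g=\varphi(\widetilde R)$ with $\widetilde R$ forgetting to $R_{g-1}$; the discrepancy between $\widetilde R$ and a lift of a crosscap-slide factorization of $R_{g-1}$ lies in $j(\pi_1(N_{g-1},x_0))$ by the Birman exact sequence, and $\varphi$ sends it into $\Y(N_g)$ by Corollary \ref{pushinY}. (In fact the point you flag as delicate is harmless either way: even if $\widetilde R$ differed from your chosen lift by a point push, that error term is exactly what $\psi$ absorbs.) The trade-off: the paper's proof is shorter and produces an explicit word in crosscap slides, but it imports Chillingworth's generating set and the nontrivial fact that $\M(N_g)$ has trivial center; yours uses only the blowup/Birman machinery already set up in Section \ref{blowup}, at the cost of being non-explicit and needing a low-genus base case. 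For that base case, starting at $g=1$ with $R_1=1$ is cleaner than your $g=2$ alternative, which quietly uses $\Gamma_2(N_2)=\Y(N_2)$ --- true by Lickorish's computation and independent of this lemma, so there is no circularity, but it is a fact the paper only records later.
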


\begin{proof} Let $F=RU_{g-1}U_{g-2}\cdots U_1T_{\alpha_1}T_{\alpha_2}\cdots T_{\alpha_{g-1}}$. It is straightforward it check that $F$ fixes each of the curves $\mu_i$, $\alpha_i$, $\beta_j$ and preserves orientation of regular neighbourhoods of the last two for $1\le i\le g-1$, $1\le j\le\lfloor g/2\rfloor$. It follows that $F$ commutes
with  $Y_i$, $T_{\alpha_i}$, $T_{\beta_j}$, and thus by Theorem \ref{Ch} it belongs to the center of $\M(N_g)$ which is trivial, according to \cite[Corollary 6.3]{Stu1}. Hence $F=1$ and
\begin{align*}
R&=U_{g-1}U_{g-2}\cdots U_1T_{\alpha_1}T_{\alpha_2}\cdots T_{\alpha_{g-1}}\\
&=U_{g-1}T_{\alpha_{g-1}}T_{\alpha_{g-1}}^{-1}U_{g-2}
T_{\alpha_{g-2}}T_{\alpha_{g-1}}T_{\alpha_{g-1}}^{-1}T_{\alpha_{g-2}}^{-1}\cdots U_1T_{\alpha_1}T_{\alpha_2}\cdots T_{\alpha_{g-1}}\\
&=Y_{g-1}T^{-1}_{\alpha_{g-1}}Y_{g-2}T_{\alpha_{g-1}}\cdots
(T_{\alpha_2}\cdots T_{\alpha_{g-1}})^{-1}Y_1(T_{\alpha_2}\cdots T_{\alpha_{g-1}})\in\Y(N).
\end{align*}
\end{proof}

Let $N_{g-1}$ be the surface obtained by replacing the leftmost crosscap in Figure \ref{mui} by a disc with center $x_0$. Taking that disc as $e(U)$, $N$ may be seen as being obtained from $N_{g-1}$ by the blowup construction described in Subsection \ref{blowup}. As a result we have the crosscap pushing map
\[\psi\colon\pi_1(N_{g-1},x_0)\to\M(N).\]
By Corollary \ref{pushinY}  $\psi(\pi_1(N_{g-1},x_0))\subset\Y(N)$ .

\begin{figure}
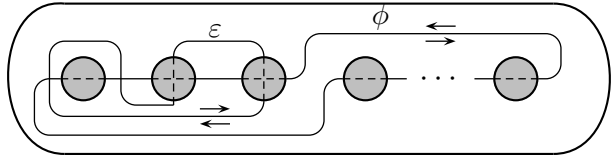

\pspicture*(8.5,2.5)
\psline(1,.25)(7.5,.25) \psline(1,2.25)(7.5,2.25)
\psbezier(1,.25)(0,.25)(0,2.35)(1,2.25)
\psbezier(7.5,.25)(8.5,.25)(8.5,2.25)(7.5,2.25)
\pscircle*[linecolor=lightgray](1.25,1.25){.3}
\pscircle(1.25,1.25){.3}
\pscircle*[linecolor=lightgray](2.45,1.25){.3}
\pscircle(2.45,1.25){.3}
\pscircle*[linecolor=lightgray](3.65,1.25){.3}
\pscircle(3.65,1.25){.3}
\pscircle*[linecolor=lightgray](5,1.25){.3}
\pscircle(5,1.25){.3}
\pscircle*[linecolor=lightgray](7,1.25){.3}
\pscircle(7,1.25){.3}
\rput[b](3,1.8){\small$\varepsilon$}
\psline[linewidth=.5pt,linearc=.2](3.65,.95)(3.65,.75)(.8,.75)(.8,1.75)(1.75,1.75)(1.75,.9)(2.45,.9)
\psline[linewidth=.5pt,linestyle=dashed,dash=3pt 2pt](2.45,.9)(2.45,1.55)
\psline[linewidth=.5pt,linestyle=dashed,dash=3pt 2pt](3.65,.95)(3.65,1.55)
\psline[linewidth=.5pt,linearc=.2](2.45,1.55)(2.45,1.75)(3.65,1.75)(3.65,1.55)
\rput[b](5.2,1.9){\small$\phi$}
\psline[linewidth=.5pt,linearc=.2](.95,1.25)(.6,1.25)(.6,.5)(4.45,.5)(4.45,1.25)(4.7,1.25)
\psline[linewidth=.5pt,linearc=.2](3.95,1.25)(4.2,1.25)(4.2,1.85)(7.6,1.85)(7.6,1.25)(7.3,1.25)
\psline[linewidth=.5pt,linestyle=dashed,dash=3pt 2pt](.95,1.25)(1.55,1.25)
\psline[linewidth=.5pt,linearc=.2](1.55,1.25)(2.15,1.25)
\psline[linewidth=.5pt,linestyle=dashed,dash=3pt 2pt](2.15,1.25)(2.75,1.25)
\psline[linewidth=.5pt,linearc=.2](2.75,1.25)(3.35,1.25)
\psline[linewidth=.5pt,linestyle=dashed,dash=3pt 2pt](3.35,1.25)(3.95,1.25)
\psline[linewidth=.5pt,linestyle=dashed,dash=3pt 2pt](4.7,1.25)(5.3,1.25)
\psline[linewidth=.5pt,linearc=.2](5.3,1.25)(5.55,1.25)
\rput[c](6,1.25){$\cdots$}
\psline[linewidth=.5pt,linearc=.2](6.45,1.25)(6.7,1.25)
\psline[linewidth=.5pt,linestyle=dashed,dash=3pt 2pt](6.7,1.25)(7.3,1.25)
%
%
%\psline[linewidth=.5pt,linearc=.2](1.15,.95)(1.15,.75)(2.25,.75)(2.25,.95)
%\psline[linewidth=.5pt,linestyle=dashed,dash=3pt 2pt](1.15,.95)(1.15,1.55)
%
\psline[linewidth=.5pt, arrowsize=2pt 2.5]{<-}(5.8,1.95)(6.2,1.95)
\psline[linewidth=.5pt, arrowsize=2pt 2.5]{->}(5.8,1.75)(6.2,1.75)
\psline[linewidth=.5pt, arrowsize=2pt 2.5]{->}(2.8,.85)(3.2,.85)
\psline[linewidth=.5pt, arrowsize=2pt 2.5]{<-}(2.8,.65)(3.2,.65)
\endpspicture
\caption{\label{ed} Curves from the proof of Lemma \ref{bpinY}.}
\end{figure}

\begin{lemma}\label{bpinY}
Every bounding pair map belongs to $\Y(N)$.
\end{lemma}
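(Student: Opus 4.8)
The plan is to reduce an arbitrary bounding pair map to the two specific bounding pair maps $T_{\beta_j}T_{\beta'_j}^{-1}$ from Figure \ref{betaj}, and then to realize the latter as elements of $\psi(\pi_1(N_{g-1},x_0))$, which lies in $\Y(N)$ by Corollary \ref{pushinY}. First I would recall the structure of a bounding pair map: if $(\gamma,\gamma')$ bounds an orientable subsurface $S\subset N$ of genus $j-1$, then up to conjugacy in $\M(N)$ (hence, by (\ref{hyh}) and the normality of $\Y(N)$, up to membership in $\Y(N)$) the pair $(\gamma,\gamma')$ is determined by the topological type of the complementary pieces. The essential invariant is the genus $j-1$ of $S$ together with how $S$ sits inside $N$; since $N$ is closed and nonorientable, the complement of $S$ is a nonorientable surface, and a change-of-coordinates principle lets me assume $(\gamma,\gamma') = (\beta_j,\beta'_j)$ as in Figure \ref{betaj}. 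I should be a little careful here about orientations: the twists $T_{\beta_j}$ and $T_{\beta'_j}$ must be right twists with respect to the \emph{same} orientation of $S$, so that $T_{\beta_j}T_{\beta'_j}^{-1}$ is the bounding pair map and not $T_{\beta_j}T_{\beta'_j}$; the figure is set up precisely so that this holds, and the homeomorphism carrying a general orientable subsurface to the standard one can be chosen to respect the chosen orientation.

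Next I would identify $T_{\beta_j}T_{\beta'_j}^{-1}$ with a crosscap pushing map. Looking at Figure \ref{betaj}, the curves $\beta_j$ and $\beta'_j$ together bound an orientable subsurface containing the first $2j$ crosscaps; but in the blowup picture, where the leftmost crosscap is the blown-up point $x_0\in N_{g-1}$, the curve $\mu$ (the image of $\gamma=e(S^1)$) is the leftmost crosscap of $N$. I want to exhibit a two-sided simple loop $\alpha$ in $N_{g-1}$ based at $x_0$ whose regular neighbourhood $M$, after the blowup, has boundary curves $\beta_j$ and $\beta'_j$. By Lemma \ref{push2}, $\psi([\alpha]) = T_\beta T_\delta^{-1}$ where $\beta,\delta$ are the two boundary curves of $M'=M\setminus \mathrm{int}(e(U))$ pushed forward, and the twists are right with respect to an orientation of $M'\setminus\mu$. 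Comparing with Figure \ref{betaj}, the loop $\alpha$ I want is one that encircles the crosscaps numbered $2,\dots,2j$ (those other than the blown-up leftmost one) in the evident way; its regular neighbourhood in $N_{g-1}$ contains $e(U)$, and blowing up turns $M\setminus\mathrm{int}(e(U))$ into exactly the orientable subsurface of genus $j-1$ bounded by $\beta_j\cup\beta'_j$. Then $\psi([\alpha]) = T_{\beta_j}T_{\beta'_j}^{-1}$ up to the orientation choice, which I match by (\ref{y^-1}) if necessary, i.e.\ by replacing $[\alpha]$ with $[\alpha]^{-1}$.

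Finally, I would assemble the pieces: $T_{\beta_j}T_{\beta'_j}^{-1} = \psi([\alpha]) \in \psi(\pi_1(N_{g-1},x_0)) \subset \Y(N)$ by Corollary \ref{pushinY}; and an arbitrary bounding pair map is conjugate in $\M(N)$ to one of the $T_{\beta_j}T_{\beta'_j}^{-1}$, so it lies in $\Y(N)$ since $\Y(N)$ is normal. The main obstacle I expect is the change-of-coordinates step: verifying that any two orientable subsurfaces of $N$ of the same genus with bounding-pair boundary are carried one to the other by a homeomorphism of $N$ \emph{preserving the orientation data}, so that the resulting conjugation sends bounding pair map to bounding pair map (rather than to its inverse or to $T_\gamma T_{\gamma'}$). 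This is a standard classification-of-surfaces argument once one keeps track of whether the complement is orientable or not — and here, $N$ being closed nonorientable of genus $g\ge 2$, the complement of an orientable subsurface of genus $j-1$ with $j\le\lfloor g/2\rfloor$ is always nonorientable and connected, so there is a single orbit and the standard model of Figure \ref{betaj} is reached. The auxiliary curves $\varepsilon$ and $\phi$ drawn in Figure \ref{ed} are presumably there to carry out this normalization explicitly by writing the conjugating element as a product of twists and crosscap slides already known to lie in $\Y(N)$ by Lemmas \ref{square}, \ref{separating}, \ref{RinY} and Corollary \ref{pushinY}.
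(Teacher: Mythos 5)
Your opening reduction (conjugate the pair into a standard position and use normality of $\Y(N)$) matches the paper's first step, but there are two genuine gaps. First, your claim that ``since $N$ is closed and nonorientable, the complement of $S$ is a nonorientable surface'' is false, and the single-orbit statement built on it fails. When $g$ is even the complement $N\backslash S$ can be orientable (e.g.\ $N_4$ decomposes as a torus with two holes glued to an annulus), and such a pair is \emph{not} conjugate to any $(\beta_j,\beta'_j)$ with nonorientable complement. The paper devotes the bulk of its proof to exactly this case, by induction on $j$: the base case $j=2$ reduces to the pair $(\beta_r,\phi)$ and the conjugating element $f=U_2^{-1}T_\varepsilon$ is shown to lie in $\Y(N)$ using Lemma \ref{push2}, and the inductive step splits $S$ along a separating curve $\gamma''$ into two bounding pairs of smaller genus. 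So the curves $\varepsilon,\phi$ of Figure \ref{ed} are not a ``normalization'' device for the generic case, as you guess at the end; they are the whole content of the orientable-complement case, which your argument omits.

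Second, your mechanism for putting $T_{\beta_j}T_{\beta'_j}^{-1}$ into $\Y(N)$ does not work. In Lemma \ref{push2} the subsurface $M$ is a regular neighbourhood of the two-sided loop $\alpha$, hence an annulus, and after the blowup $M'$ is a M\"obius band with two holes (it contains the crosscap $\mu$); it is never ``the orientable subsurface of genus $j-1$ bounded by $\beta_j\cup\beta'_j$.'' Consequently $\psi([\alpha])=T_\beta T_\delta^{-1}$ where $\beta\cup\delta$ cobounds a copy of $N_{1,2}$, and since the only subsurfaces cobounded by $\beta_j$ and $\beta'_j$ are $S_{j-1,2}$ and $N_{g-2j,2}$, such a loop $\alpha$ with $(\beta,\delta)=(\beta_j,\beta'_j)$ exists only when $g-2j=1$. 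For general $j$ the map $T_{\beta_j}T_{\beta'_j}^{-1}$ is simply not in the image of the crosscap pushing map. The paper instead handles the nonorientable-complement case with the reflection $R$ of Lemma \ref{RinY}: since $RT_{\beta_j}R=T_{\beta'_j}$, one has $T_{\beta_j}T_{\beta'_j}^{-1}=\left(T_{\beta_j}RT_{\beta_j}^{-1}\right)R\in\Y(N)$ by normality. You would need to replace your Lemma \ref{push2} step with an argument of this kind, and then add the entire orientable-complement case.
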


\begin{proof} Let $T_{\gamma}T^{-1}_{\gamma'}$ be a bounding pair map, where $\gamma\cup \gamma'$ is the boundary of an orientable subsurface  $S$ of genus $j-1$ and the twists $T_\gamma$, $T_{\gamma'}$ are right with respect to some orientation of $S$. If $\gamma$ and $\gamma'$ are separating, then we are done by Lemma \ref{separating}, therefore we are assuming that $\gamma$ and $\gamma'$ are nonseparating.

If $N\backslash S$ is nonorientable, then there is a homeomorphism $h\colon N\to N$ which maps $(\gamma, \gamma')$ to
$(\beta_j, \beta'_j)$ (Figure \ref{betaj}), so
$hT_{\gamma}T^{-1}_{\gamma'}h^{-1}=(T_{\beta_j}T^{-1}_{\beta'_j})^{\pm 1}$ and it suffices to prove that $T_{\beta_j}T^{-1}_{\beta'_j}\in\Y(N)$. We have $RT_{\beta_j}R=T_{\beta'_j}$ and
$T_{\beta_j}T^{-1}_{\beta'_j}=T_{\beta_j}RT^{-1}_{\beta_j}R$, and since $R$ is in $\Y(N)$ by Lemma \ref{RinY}, thus $T_{\beta_j}RT^{-1}_{\beta_j}\in\Y(N)$ and $T_{\beta_j}T^{-1}_{\beta'_j}\in\Y(N)$.

Suppose that $N\backslash S$ is orientable. This can only happen if $g$ is even, say $g=2r$, and $N\backslash\gamma$ and $N\backslash\gamma'$ are orientable. We proceed by induction on $j$. If $j=1$ then $S$ is an annulus and 
$T_{\gamma}T^{-1}_{\gamma'}=1$. If $j=2$ then 
$T_{\gamma}T^{-1}_{\gamma'}$ is conjugate in $\M(N)$ to $(T_{\beta_r}T^{-1}_{\phi})^{\pm 1}$, where $\beta_r$ is the curve in Figure \ref{betaj} and $\phi$ is the curve in Figure \ref{ed}. As in the previous case, it suffices to show that $T_\phi=fT_{\beta_r}f^{-1}$ for some $f\in\Y(N)$. It is easy to check that we may take $f=U_2^{-1}T_\varepsilon=Y_2^{-1}T_{\alpha_2}^{-1}T_\varepsilon$, where $\varepsilon$ is the curve in Figure \ref{ed}. To see that $T_{\alpha_2}^{-1}T_\varepsilon\in\Y(N)$ notice that $\varepsilon$ and $\alpha_2$ bound a M\"obius strip with two holes, and by Lemma \ref{push2}, $T_{\alpha_2}^{-1}T_\varepsilon$ is in the image of the crosscap pushing map $\psi$, which is a subgroup of $\Y(N)$.

If $j>2$ then let $\gamma''$ be a separating simple closed curve in $S$ such that
$\gamma\cup\gamma''=\bdr{S'}$, $\gamma'\cup\gamma''=\bdr{S''}$ for subsurfaces
$S'$, $S''$ of genera less than $j-1$. We have 
$T_{\gamma}T^{-1}_{\gamma'}=(T_{\gamma}T^{-1}_{\gamma''})(T_{\gamma''}T^{-1}_{\gamma'})\in\Y(N)$
because the bounding pair maps on the right hand side are in $\Y(N)$ by the inductive hypothesis.
\end{proof}

\begin{figure}
\input{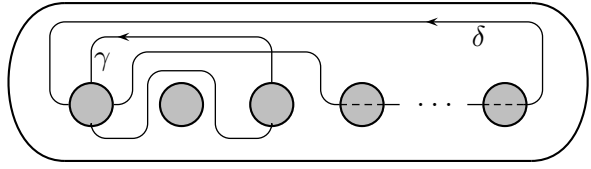}
\caption{\label{gd} Curves from the proof of Lemma \ref{y1}.}
\end{figure}

\begin{lemma}\label{y1}
Every crosscap slide is a product of crosscap slides conjugate in $\M(N)$ to $Y_1$.
\end{lemma}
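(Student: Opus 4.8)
The plan is to reduce an arbitrary crosscap slide to a word in crosscap slides produced by the crosscap pushing map, and then to decompose that word using the group structure of $\pi_1$.

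First, by the change of coordinates principle together with (\ref{hyh}) it is enough to treat crosscap slides of the form $Y_{\mu_1,\alpha'}$, where $\mu_1$ is the one-sided curve of Figure \ref{mui} and $\alpha'$ is an arbitrary two-sided simple closed curve meeting $\mu_1$ in one point. Indeed, given $Y_{\mu,\alpha}$, the complement of a M\"obius neighbourhood of $\mu_1$ in $N$ is nonorientable, so every homeomorphism type of the configuration ``one-sided curve together with a two-sided curve meeting it once'' is realized with the one-sided curve equal to $\mu_1$; hence there is $h\in\Dif(N)$ with $h(\mu)=\mu_1$ and $h(\alpha)=\alpha'$, and a conjugate of a product of crosscap slides conjugate to $Y_1$ is again such a product.

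Next, regard $N$ as obtained from $(N_{g-1},x_0)$ by the blowup construction of Subsection \ref{blowup} (as set up before Lemma \ref{bpinY}), with $\mu_1$ the resulting one-sided curve, and let $\psi\colon\pi_1(N_{g-1},x_0)\to\M(N)$ be the crosscap pushing map. Cutting $N$ along the boundary of the M\"obius neighbourhood of $\mu_1$ and capping it with the disc $e(U)$ carries $\alpha'$ to a simple loop $c$ in $(N_{g-1},x_0)$ with $\psi([c])=Y_{\mu_1,\alpha'}$, by the description of the point pushing map underlying Lemma \ref{push1}. The loop $c$ is one-sided, since otherwise $\psi([c])$ would be a bounding pair map by Lemma \ref{push2}, and a crosscap slide is not a product of Dehn twists. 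Writing $[c]=a_{i_1}^{\varepsilon_1}\cdots a_{i_k}^{\varepsilon_k}$ as a word in the standard generators $a_1,\dots,a_{g-1}$ of $\pi_1(N_{g-1},x_0)$, each represented by a one-sided simple loop, and using that $\psi$ is a homomorphism together with Lemma \ref{push1}, we obtain
\[Y_{\mu_1,\alpha'}=\psi([c])=\prod_{j=1}^{k}\psi(a_{i_j})^{\varepsilon_j}=\prod_{j=1}^{k}Y_{\mu_1,\widetilde a_{i_j}}^{\varepsilon_j},\]
a product of crosscap slides and their inverses.

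It remains to show that each $Y_{\mu_1,\widetilde a_i}$, and hence also its inverse, is conjugate in $\M(N)$ to $Y_1$. A regular neighbourhood of $\mu_1\cup\widetilde a_i$ is the blowup of a M\"obius neighbourhood $M_i$ of $a_i$ in $N_{g-1}$, so its complement in $N$ corresponds to $N_{g-1}\setminus M_i\cong N_{g-2,1}$, which is nonorientable for $g\ge 3$ (and a disc for $g=2$); the complement of a regular neighbourhood of $\mu_1\cup\alpha_1$ is of the same type, so by the change of coordinates principle there is $h\in\Dif(N)$ with $h(\mu_1)=\mu_1$ and $h(\widetilde a_i)=\alpha_1$ as unoriented curves, whence $hY_{\mu_1,\widetilde a_i}h^{-1}=Y_1^{\pm1}$ by (\ref{hyh}) and (\ref{y^-1}). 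Moreover $Y_1^{-1}=Y_{\mu_1,\alpha_1^{-1}}$ is conjugate to $Y_1$, since there is a homeomorphism of $N$ fixing $\mu_1$ and reversing the orientation of $\alpha_1$; the curves $\gamma$ and $\delta$ of Figure \ref{gd} serve to make these configurations explicit. The points demanding the most care are the identification $\psi([c])=Y_{\mu_1,\alpha'}$, which rests on the precise form of the point pushing map, and this last orientation bookkeeping, in particular the conjugacy $Y_1\sim Y_1^{-1}$, whose reversing homeomorphism must be produced by hand.
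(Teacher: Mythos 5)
Your argument is correct, and it rests on the same two pillars as the paper's proof --- the change of coordinates principle and the fact that the crosscap pushing map $\psi$ is a homomorphism --- but it organizes the reduction differently. The paper splits into cases according to whether the complement of a Klein-bottle neighbourhood $K$ of $\mu\cup\alpha$ is nonorientable or orientable: in the first case $Y_{\mu,\alpha}$ is conjugated directly onto $Y_1$, and only in the exceptional second case (which forces $g$ even and puts $\alpha$ in the position of $\beta_r$ of Figure \ref{betaj}) does it invoke $\psi$, decomposing $Y_{\mu_1,\beta_r}$ into the three explicit slides $Y_{\mu_1,\delta}Y_{\mu_1,\gamma}Y_{\mu_1,\alpha_1}$ of Figure \ref{gd}, each already of the first type and with no inverses appearing. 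You instead treat all cases uniformly: after moving $\mu$ onto $\mu_1$ (legitimate, since a one-sided curve met once by a two-sided curve necessarily has nonorientable complement, hence is equivalent to $\mu_1$), you blow down and expand $[c]$ as an arbitrary word in the standard one-sided generators of $\pi_1(N_{g-1},x_0)$, exactly as in Corollary \ref{pushinY}. The uniformity comes at the price of two obligations the paper sidesteps: you must verify that each generator slide $Y_{\mu_1,\widetilde a_i}$ is of the ``nonorientable complement'' type (you do this), and, because your word may carry negative exponents, you must show that $Y_1^{-1}$ is conjugate to $Y_1$. You flag this last point but leave it slightly open; it is supplied by the reflection $R$ of Section \ref{s3}, which fixes $\mu_1$ and reverses $\alpha_1$, so that $RY_1R=Y_1^{-1}$ --- the same identity the paper uses in the proof of Theorem \ref{invol}. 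With that detail closed, your proof is complete and, if anything, slightly more general, since it never needs to identify the exceptional configuration explicitly.
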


\begin{proof} Let $Y=Y_{\mu,\alpha}$ be any crosscap slide and let $K$ and be a regular neighbourhood of $\alpha\cup\mu$, which is homeomorphic to the Klein bottle with a hole. Clearly there is a homeomorphism $h\colon K\to K_1$, where $K_1$ is a regular neighbourhood of $\alpha_1\cup\mu_1$, such that $h(\alpha)=\alpha_1$ and $h(\mu)=\mu_1$. If $N\backslash K$ is nonorientable or $g=2$, then $h$ may be extended to $h\colon N\to N$ and by (\ref{hyh}) we have 
$hY_{\mu,\alpha}h^{-1}=Y_{\mu_1,\alpha_1}$, hence $Y$ is conjugate to $Y_1$ in $\M(N)$. Suppose that $g>2$ and $N\backslash K$ is orientable. This forces the genus of $N$ to be even, $g=2r\ge 4$. Now $Y_{\mu,\alpha}$ is conjugate in $\M(N)$ to
$Y_{\mu_1,\beta_r}$, where $\beta_r$ is the curve in Figure \ref{betaj} for $j=r$ ($N\backslash\beta_r$ is orientable). 
Note that $Y_{\mu_1,\beta_r}$ is in the image of the crosscap pushing map
$\psi\colon\pi_1(N_{g-1},x_0)\to\M(N)$.
Consider the two-sided curves $\gamma$, $\delta$ in Figure \ref{gd}. Since 
$\psi$ is a homomorphism, sliding $\mu_1$ along $\beta_r$ is the same, up to isotopy, as sliding $\mu_1$ along $\alpha_1$ first, then along $\gamma$, then along $\delta$. Thus 
$Y_{\mu_1,\beta_r}=Y_{\mu_1,\delta}Y_{\mu_1,\gamma}Y_{\mu_1,\alpha_1}$, where each of the three crosscap slides on the right hand side is conjugate to $Y_1$ by the previous argument.  \end{proof}

\begin{theorem}\label{invol}
The group $\Y(N)$ is generated by involutions.
\end{theorem}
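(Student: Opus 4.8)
The plan is to reduce the statement to Lemma \ref{y1} together with Lemma \ref{RinY}. By Lemma \ref{y1}, every crosscap slide is a product of conjugates of $Y_1$, and since conjugates of an involution are involutions, it suffices to exhibit each generator of $\Y(N)$ in question as a product of involutions. The natural strategy is: (i) show that every conjugate of $Y_1$ in $\M(N)$ lies in the subgroup generated by involutions of $\Y(N)$; and (ii) since $\Y(N)$ is generated by all crosscap slides (by definition) and each is a product of such conjugates (Lemma \ref{y1}), conclude that $\Y(N)$ is generated by involutions.

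First I would recall from \eqref{y2} that $Y_{\mu,\alpha}^2 = T_\gamma$ where $\gamma = \bdr K$ bounds a Klein bottle with a hole; in the closed surface $N$, the curve $\gamma$ bounds a nonorientable subsurface of genus $2$, so by Lemma \ref{separating} we have $T_\gamma \in \Y(N)$, and in fact $T_\gamma$ is itself the square of the crosscap slide. The key observation is that $Y_1$ is \emph{not} in general an involution, but $R$ is (it has order $2$), and more usefully, products like $Y_{\mu,\alpha} h$ can be arranged to be involutions for a suitable involution $h$ conjugating $\mu,\alpha$ appropriately. Concretely, I would look for an involution $J$ of $N$ such that $J Y_1 J = Y_1^{-1}$; then $Y_1 J$ satisfies $(Y_1 J)^2 = Y_1 (J Y_1 J) = Y_1 Y_1^{-1} = 1$, so $Y_1 J$ is an involution, and hence $Y_1 = (Y_1 J) J$ is a product of two involutions provided $J \in \Y(N)$. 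The reflection $R$ from the paragraph before Theorem \ref{Ch} is the obvious candidate: $R$ fixes $\mu_1$ and reverses the orientation of $\alpha_1$ (it is a reflection in the plane through the crosscap centers), so by \eqref{hyh} and \eqref{y^-1} one gets $R Y_1 R = Y_{R(\mu_1), R(\alpha_1)} = Y_{\mu_1, \alpha_1^{-1}} = Y_1^{-1}$. Since $R \in \Y(N)$ by Lemma \ref{RinY}, this shows $Y_1 = (Y_1 R) R$ is a product of two involutions in $\Y(N)$.

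Next I would upgrade this to arbitrary conjugates: if $f \in \M(N)$ then $f Y_1 f^{-1} = (f Y_1 f^{-1})(f R f^{-1}) \cdot (f R f^{-1})^{-1}$, but more simply $f Y_1 f^{-1} = (f (Y_1 R) f^{-1})(f R f^{-1})$, and each factor $f(Y_1 R)f^{-1}$, $f R f^{-1}$ is a conjugate of an involution, hence an involution. Therefore every crosscap slide conjugate to $Y_1$ is a product of two involutions, and by Lemma \ref{y1} every crosscap slide is a product of such, hence a product of involutions lying in $\Y(N)$; since $\Y(N)$ is generated by crosscap slides, it is generated by involutions.

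The main point to verify carefully is the relation $R Y_1 R = Y_1^{-1}$, i.e.\ that the reflection $R$ genuinely reverses the orientation of (a neighborhood of) $\alpha_1$ while preserving $\mu_1$ as an unoriented curve; this is exactly the kind of ``orientation of $\alpha$'' dependence emphasized after \eqref{hyh}, and it is the one place where I would draw the picture to be sure the sign works out. An alternative, if one prefers not to rely on $R$ acting this way, is to use the crosscap transposition $U_1 = T_{\alpha_1} Y_1$: one checks $U_1$ is an involution (it transposes two crosscaps and squares to the identity, or one verifies $(T_{\alpha_1} Y_1)^2 = T_{\alpha_1} Y_1 T_{\alpha_1} Y_1 = T_{\alpha_1} T_{\alpha_1}^{-1} = 1$ using $Y_1 T_{\alpha_1} Y_1^{-1} = T_{\alpha_1}^{-1}$ as in the proof of Lemma \ref{square}), so that $Y_1 = T_{\alpha_1}^{-1} U_1$; it then remains to see $T_{\alpha_1}$ (or $T_{\alpha_1}^{-1}$) is a product of involutions in $\Y(N)$, which would follow from writing $T_{\alpha_1} = (T_{\alpha_1} Y_{\mu_1, \alpha_1}^{\pm 1}) \cdots$ appropriately; the cleaner route remains the one via $R$.
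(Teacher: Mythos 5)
Your proof is correct and follows essentially the same route as the paper: reduce via Lemma \ref{y1} and normality of $\Y(N)$ to showing $Y_1$ is a product of involutions, then use $RY_1R=Y_1^{-1}$ (from $R(\mu_1)=\mu_1^{-1}$, $R(\alpha_1)=\alpha_1^{-1}$ together with \eqref{hyh} and \eqref{y^-1}) and Lemma \ref{RinY} to write $Y_1$ as a product of the two involutions $R$ and $Y_1R$. The sign check you flag is exactly the one the paper relies on, and it does work out.
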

\begin{proof} Since $\Y(N)$ is a normal subgroup of $\M(N)$ and conjugate of an involution is also an involution, it suffices, by Lemma \ref{y1}, to show that $Y_1$ is a product of involutions in $\Y(N)$. The reflectional involution $R$ is in $\Y(N)$ by Lemma \ref{RinY}. Since $R$ maps $\mu_1$, $\alpha_1$ to
$\mu_1^{-1}$, $\alpha_1^{-1}$, we have 
$RY_1R=Y_1^{-1}$, thus $RY_1$ is an involution in $\Y(N)$ and $Y_1$ is a product of two involutions $Y_1=R(RY_1)$.\end{proof}

\section{Level $2$  mapping class groups}\label{s4}

In this section we assume that $S=S_{g,n}$ is an orientable surface of genus $g\ge 2$ having $n\in\{0,1\}$ boundary components. 

The action of $\M(S)$ on $H_1(S;\Z)$ preserves the algebraic intersection pairing, which is a nondegenerate alternating form, so we have a  representation of $\M(S)$ into the symplectic group, which is well known to be surjective and whose kernel $\calI(S)$ is known as the {\it Torelli group}. This is summarized by the exact sequence
\[1\longrightarrow\calI(S)\longrightarrow\M(S)\longrightarrow\Sp(2g,\Z)\longrightarrow 1.\] 
Wy are going to use the following classical theorem proved by Powell \cite{Powell} building on Birman's result \cite{Bir}.
\begin{theorem}[Powell \cite{Powell}]\label{BP}
The Torelli group $\calI(S)$ is generated by Dehn twists about separating curves and bounding pair maps.
\end{theorem}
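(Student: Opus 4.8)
The plan is to prove this by induction on the genus $g$, using the classical strategy of cutting along a nonseparating simple closed curve. Write $\overline{\calI}(S)$ for the subgroup of $\calI(S)$ generated by Dehn twists about separating curves together with bounding pair maps; since conjugation permutes each of these two families, $\overline{\calI}(S)$ is normal in $\M(S)$, and the goal is to prove $\overline{\calI}(S)=\calI(S)$. For the base case, genus $2$, I would argue directly, using that the mapping class groups $\M(S_2)$ and $\M(S_{2,1})$ are well understood (the closed case, where $\calI(S_2)$ turns out to be generated by Dehn twists about separating curves, goes back to Mess).

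For the inductive step I would fix $f\in\calI(S_{g,n})$ and a nonseparating simple closed curve $a\subset S$, and first establish the auxiliary lemma that any two homologous nonseparating simple closed curves are carried onto one another by a product of bounding pair maps. That lemma is itself proved by induction on geometric intersection number: homologous curves meet in an even number of points, and one reduces to the disjoint case, where the two curves cobound a positive-genus subsurface across which they can be interpolated by an explicit chain of bounding pairs. Since $f\in\calI(S)$, the curves $a$ and $f(a)$ are homologous, so the lemma supplies $\phi\in\overline{\calI}(S)$ with $\phi f(a)=a$; replacing $f$ by $\phi f$, I may assume $f$ fixes $a$ together with its orientation and its two sides (the last point because $f$ acts trivially on $H_1$). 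Cutting $S$ along $a$ produces $S_a\cong S_{g-1,n+2}$, and $f$ induces a mapping class $\hat f$ of $S_a$; the cutting homomorphism is well understood, its kernel being carried by the Dehn twist about $a$, which does not lie in $\calI(S)$, and $\hat f$ lies in the subgroup of $\M(S_a)$ of classes acting trivially on $H_1(S)$.

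The remaining task, which I expect to be the main obstacle, is to show that this subgroup of $\M(S_a)$---genuinely larger than $\calI(S_a)$, the difference coming from maps that drag the two new boundary components of $S_a$ around loops and over handles---is generated, after gluing back along $a$, by separating twists and bounding pair maps of $S$. Two technical points demand care. First, the inductive hypothesis describes $\calI(S_a)$ in terms of separating twists and bounding pair maps of $S_a$, but a curve separating $S_a$ may become nonseparating once the two boundary components coming from $a$ are glued back together; one must check that the reglued class is then still a bounding pair map (or a product of separating twists) of $S$, which holds because homological triviality is preserved under regluing. Second, the ``extra'' generators---the boundary-pushing and handle-sliding maps of $S_a$---must be identified, via the Birman-type exact sequences relating $\M(S_a)$ to mapping class groups of less complicated surfaces, as products of bounding pair maps and separating twists of $S$. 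Combining these facts yields $f\in\overline{\calI}(S)$ and closes the induction. (Powell's original proof reaches the same conclusion by a more computational route: it uses a presentation of $\Sp(2g,\Z)$ by elementary symplectic generators with Steinberg-type relations, lifts these generators and relations to $\M(S)$, and verifies that each relation lifts up to an explicit separating twist or bounding pair map, so that the induced map $\M(S)/\overline{\calI}(S)\to\Sp(2g,\Z)$ is an isomorphism.)
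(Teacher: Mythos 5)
The paper offers no proof of this statement: it is imported verbatim from Powell, so the only comparison available is with the proofs in the literature. Your closing parenthetical correctly describes Powell's actual argument (lift a presentation of $\Sp(2g,\Z)$ by elementary generators to $\M(S)$ and check that each relator lifts to a product of separating twists and bounding pair maps), while the body of your sketch follows the later, more geometric induction-on-genus route of Hatcher--Margalit and Putman. That route is legitimate, and your auxiliary lemma (homologous nonseparating curves are related by a product of bounding pair maps) is a true and standard ingredient of it, but as written the sketch has a genuine gap rather than a deferred technicality.

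The gap is that your induction does not close up. Cutting $S_{g,n}$ ($n\le 1$) along a nonseparating curve $a$ yields $S_a\cong S_{g-1,n+2}$, a surface with two or three boundary components, to which your inductive hypothesis does not apply; and for such surfaces ``the Torelli group'' is not the kernel of the action on $H_1$ --- the present paper makes exactly this point in Section 4, where $\calI(\Sigma)$ for $\Sigma=S_{g-1,2}$ must be defined relative to an embedding, the boundary twists $T_{\delta_i}$ acting trivially on $H_1(\Sigma;\Z)$ without lying in $\calI(\Sigma)$. So the statement you feed back into the induction has to be reformulated for these relative (partitioned) Torelli groups, and proving generation for them is where the substance lies; the step you yourself label ``the main obstacle'' --- that the classes of $\M(S_a)$ acting trivially on $H_1(S)$ reglue to products of separating twists and bounding pair maps --- is essentially the theorem itself in this scheme, and in Hatcher--Margalit it is resolved only via a nontrivial connectivity theorem for a complex of handles, for which your sketch offers no substitute. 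Two smaller points: the kernel of the regluing map $\M(S_a)\to\Stab_{\M(S)}(a)$ is generated by $T_{\delta_1}T_{\delta_2}^{-1}$, not ``carried by the Dehn twist about $a$'' as you write; and the appeal to Mess in genus $2$ is anachronistic though not circular (Powell already proved that $\calI(S_2)$ is generated by separating twists, all bounding pair maps there being trivial).
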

\noindent This result was later improved by Johnson \cite{John1} who showed, that for $g\ge 3$, $\calI(S)$ is generated by finitely many bounding pair maps.

\medskip

For $m\ge 1$ let $\Sp(2g,\Z)[m]$ denote the {\it level $m$ congruence subgroup} of $\Sp(2g,\Z)$, that is, the subgroup of matrices that are equal to the identity modulo $m$. There is an exact sequence
\[1\longrightarrow\Sp(2g,\Z)[m]\longrightarrow\Sp(2g,\Z)\longrightarrow\Sp(2g,\Z_m)\longrightarrow 1,\]
where $\Z_m=\Z/m\Z$.
The pull-back of $\Sp(2g,\Z)[m]$ to $\M(S)$ is known as the {\it level $m$ subgroup} of $\M(S)$ and is denoted by $\Gamma_m(S)$. The group $\Gamma_m(S)$ can also be described as the group of isotopy classes of homeomorphisms that act trivially on $H_1(S;\Z_m)$. It fits into an exact sequence
\begin{equation}\label{ses3}
1\longrightarrow\calI(S)\longrightarrow\Gamma_m(S)\longrightarrow\Sp(2g,\Z)[m]\longrightarrow 1.
\end{equation}

It is well known that for $m \ge 3$, $\Sp(2g,\Z)[m]$ and $\Gamma_m(S)$ are torsion free. The case $m=2$ is exceptional, 
$\Gamma_2(S_{g,0})$ contains torsion elements, for example the hyperelliptic involution.

\medskip

For $h_1, h_2\in H_1(S;\Z)$ let $i(h_1,h_2)$ denote the algebraic intersection number. 
Let us fix a symplectic basis $\{a_1,\dots,a_g,b_1,\dots,b_g\}$ of $H_1(S;\Z)$, that is \[i(a_i,b_j)=\delta_{ij},\quad i(a_i,a_j)=0, \quad i(b_i,b_j)=0,\]
for $1\le i,j\le g$, where $\delta_{ij}$ is the Kronecker delta. We also fix a realisation of this basis by simple closed curves $\{\alpha_1,\dots,\alpha_g,\beta_1,\dots,\beta_g\}$, such that
$a_i=[\alpha_i]$, $b_i=[\beta_i]$ and
\[|\alpha_i\cap \beta_j|=\delta_{ij},\quad \alpha_i\cap \alpha_j=\emptyset, \quad \beta_i\cap \beta_j=\emptyset,\]
for $1\le i,j\le g$.
Recall that a homology class $h\in H_1(S;\Z)$ can be realised by a simple closed curve if and only if $h$ is primitive, which means that the greatest common divisor of the coefficients of $h$ in a symplectic basis equals 1.
For $f\in\M(S)$ we denote by $\overline{f}$ the   induced automorphism of $H_1(S;\Z)$ and, by abuse of notation, its matrix with respect to the basis  $\{a_1,\dots,a_g,b_1,\dots,b_g\}$. If $\gamma$ is a simple closed curve on $S$, then the right Dehn twist $T_\gamma$ acts on $H_1(S;\Z)$ by the transvection
\[\overline{T_\gamma}(h)=h+i(h,[\gamma])[\gamma].\]
From the above formula it is immediate that $T^2_\gamma\in\Gamma_2(S)$. The following lemma follows from \cite[Lemma 5]{John3}.

\begin{lemma}\label{mtwists1}
For $g\ge 2$ every element of $\Sp(2g,\Z)[2]$ is induced by a product of 
squares of Dehn twists about nonseparating curves on $S_{g,1}$.
\end{lemma}

From the exactness of sequence (\ref{ses3}), Theorem \ref{BP} and Lemma \ref{mtwists1} we have the following. 

\begin{cor}\label{gen1}
For $g\ge 2$ the group $\Gamma_2(S_{g,1})$ is generated by Dehn twists about separating curves, bounding pair maps and squares of Dehn twists about nonseparating curves.
\end{cor}

For the rest of this section assume that $S=S_g$ is closed and $\Sigma=S_{g-1,2}$.
Consider an embedding $i\colon\Sigma\hookrightarrow S$ such that $S\backslash i(\Sigma)$ is an annulus with core $\beta_g$. Such embedding is called capping in \cite{Put}. Since every homeomorphism of $i(\Sigma)$ equal to the identity on $i(\bdr{\Sigma})$ can be extended by the identity on $S\backslash i(\Sigma)$ to a homeomorphism of $S$, we have an induced homomorphism  $i_\ast\colon\M(\Sigma)\to\M(S)$. The image of $i_\ast$ is equal to the subgroup
$\Stab_{\M(S)}(\beta_g)$ consisting of the isotopy classes of homeomorphisms fixing the curve $\beta_g$ and preserving its orientation. Let $\delta_1$, $\delta_2$ be two simple closed curves in $\Sigma$ isotopic to the boundary components.  The kernel of $i_\ast$ is generated by the bounding pair map $T_{\delta_1}T^{-1}_{\delta_2}$. Summarising, we have an exact sequence
\[1\longrightarrow\lr{T_{\delta_1}T^{-1}_{\delta_2}}
\longrightarrow\M(\Sigma)\longrightarrow\Stab_{\M(S)}(\beta_g)\longrightarrow 1.\]
Following \cite{Put} we define $\calI(\Sigma)$ as
\[\calI(\Sigma)=i^{-1}_\ast(\calI(S)).\]
For a surface having more then one boundary component various definitions of a Torelli group are possible. The group $\calI(\Sigma)$ defined above is the same as 
$\calI(\Sigma,\{\{\delta_1,\delta_2\}\})$ in the notation of \cite{Put} and it agrees with the definition of the Torelli group of a surface with boundary given by Johnson \cite{John2}.
Note that $\calI(\Sigma)$ {\it is not} the kernel of the action of $\M(\Sigma)$ on $H_1(\Sigma;\Z)$. For example, $T_{\delta_i}$ for $i=1,2$ act trivially on the homology group, but they are not elements of $\calI(\Sigma)$. Nevertheless, $\calI(\Sigma)$ can also be defined intrinsically, without reference to $S$, by using relative homology, see \cite{John2,Put}.  
Note that $\gamma$ is a bounding curve in $\Sigma$ if and only if $i(\gamma)$ is separating in $S$, and $(\gamma, \gamma')$ is a bounding pair in $\Sigma$ if and only if $(i(\gamma), i(\gamma'))$ is a bounding pair in $S$. 

\begin{theorem}[\cite{Put}]\label{P}
The group $\calI(\Sigma)$ is generated by Dehn twists about 
bounding curves and bounding pair maps.
\end{theorem}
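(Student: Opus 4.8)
The plan is to deduce Theorem~\ref{P} from Powell's Theorem~\ref{BP} by an induction on complexity that cuts a surface along nonseparating simple closed curves, the bookkeeping of a single cut being exactly the exact sequence
\[1\longrightarrow\lr{T_{\delta_1}T^{-1}_{\delta_2}}\longrightarrow\M(\Sigma)\stackrel{i_\ast}{\longrightarrow}\Stab_{\M(S)}(\beta_g)\longrightarrow 1\]
recalled above, where $\beta_g$ is the core of the complementary annulus. Restricting $i_\ast$ to Torelli groups gives $1\to\lr{T_{\delta_1}T^{-1}_{\delta_2}}\to\calI(\Sigma)\stackrel{i_\ast}{\to}\Stab_{\calI(S)}(\beta_g)\to 1$, where $\Stab_{\calI(S)}(\beta_g)=\calI(S)\cap\Stab_{\M(S)}(\beta_g)$. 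By the correspondences recalled just before the theorem, $i_\ast$ carries twists about bounding curves of $\Sigma$ to separating twists of $S$ disjoint from $\beta_g$, and bounding pair maps of $\Sigma$ to bounding pair maps of $S$ disjoint from $\beta_g$, with every such twist or map of $S$ arising in this way; moreover $T_{\delta_1}T^{-1}_{\delta_2}$ is itself a bounding pair map of $\Sigma$. Hence Theorem~\ref{P} is equivalent to the assertion that $\Stab_{\calI(S)}(\beta_g)$ is generated by separating twists and bounding pair maps disjoint from $\beta_g$; unwinding the exact sequence, this is in turn the special case $F=\Sigma$ of the general claim that for every compact orientable surface $F$ the Torelli group $\calI(F)$, defined via capping as for $\Sigma$, is generated by twists about bounding curves and by bounding pair maps. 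I would prove this general claim by induction on the complexity $\xi(F)=3\,\mathrm{genus}(F)+\#\bdr{F}$.

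The base cases are $\bdr{F}=\emptyset$, which is Theorem~\ref{BP} (with sphere and torus trivial), and $F$ planar, which is elementary since on a planar surface every twist is about a bounding curve. For the inductive step with $\mathrm{genus}(F)\ge 1$ I would let $\calI(F)$ act simplicially on a suitable complex $X$ built from nonseparating simple closed curves in $F$ --- with vertices their isotopy classes and higher simplices recording disjointness together with a bounding condition (so that curves spanning an edge form a bounding pair, or carry compatible weights representing a fixed homology class, in the spirit of the complex-of-cycles constructions of Bestvina--Bux--Margalit). One needs two properties of this action: first, $X$ is connected and simply connected; and second, the quotient $X/\calI(F)$ is connected and simply connected. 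Granting these, the standard lemma on a group acting on a simply connected complex with simply connected quotient gives $\calI(F)=\lr{\Stab_{\calI(F)}(v)\ :\ v\in X^{(0)}}$.

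It then remains to understand a vertex stabiliser $\Stab_{\calI(F)}(\gamma)$ for $\gamma$ a nonseparating curve. Cutting $F$ along $\gamma$ produces a surface $F'$ with two new boundary circles and $\xi(F')=\xi(F)-1$, and the general form of the exact sequence displayed above presents $\Stab_{\calI(F)}(\gamma)$ as a quotient of $\calI(F')$ by a bounding pair map. By the inductive hypothesis $\calI(F')$ is generated by twists about bounding curves and bounding pair maps of $F'$, and these map under the cutting homomorphism to twists about bounding curves and bounding pair maps of $F$, all disjoint from $\gamma$. Hence every vertex stabiliser lies in the subgroup of $\calI(F)$ generated by twists about bounding curves and bounding pair maps, and therefore so does $\calI(F)$ itself; the reverse inclusion is clear, so this closes the induction and, taking $F=\Sigma$, proves Theorem~\ref{P}.

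The main obstacle is establishing the two connectivity properties of the action, in particular the simple connectivity of $X$ and of its Torelli quotient $X/\calI(F)$: the first is a curve-surgery argument in the spirit of connectivity proofs for curve complexes, but the second is delicate and is precisely where Putman's ``cut-and-paste'' machinery does its work --- it is also what forces a careful choice of the complex $X$ (passing to oriented curves, or to weighted cycles in a fixed homology class) so that both properties can be made to hold simultaneously.
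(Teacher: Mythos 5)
This statement is quoted by the paper from Putman's article \cite{Put} and is not proved in the paper at all, so there is no internal proof to compare against; what you have written is, in outline, a reconstruction of Putman's own argument (Birman exact sequences for cutting along curves, induction on complexity with Powell's theorem as the closed base case, and an action of the Torelli group on a complex built from nonseparating curves in a fixed homology class). The top-level reduction is sound: with the paper's definition $\calI(\Sigma)=i_\ast^{-1}(\calI(S))$, the restriction of the capping sequence to Torelli groups does identify Theorem \ref{P} with the claim that $\Stab_{\calI(S)}(\beta_g)$ is generated by separating twists and bounding pair maps supported off $\beta_g$, and the dictionary between bounding curves/pairs in $\Sigma$ and in $S$ is exactly as the paper records.

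As a proof, however, the proposal has genuine gaps, and they sit precisely where you acknowledge them. First, the simple connectivity of the complex $X$ and of the quotient $X/\calI(F)$ is not an incidental verification: for the Torelli group the quotient statement is the entire technical content of Putman's paper, and ``in the spirit of Bestvina--Bux--Margalit'' is not an argument; until a specific complex is fixed and both connectivity claims are proved, nothing has been shown. Second, the ``standard lemma'' you invoke needs more than simple connectivity of $X$ and $X/G$ --- one needs the action to be without rotations (a stabiliser of a simplex must fix it pointwise), and in fact generation by vertex stabilisers of a \emph{single} orbit requires connectivity data on the quotient plus a careful choice of connecting elements; Putman proves a bespoke version of this lemma. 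Third, the inductive step silently assumes that the cutting map $\calI(F')\to\Stab_{\calI(F)}(\gamma)$ is surjective and that ``$\calI(F')$, defined via capping'' is a well-defined intrinsic object to which the inductive hypothesis applies. For a surface with several boundary components the Torelli group depends on a partition of the boundary (the two circles created by cutting along $\gamma$ must be grouped together), and establishing that this relative Torelli group is independent of the embedding, and that cutting lands inside it, is again a substantial part of Putman's formalism rather than a formality. So the architecture is right, but the proposal is an outline of the known proof rather than a proof.
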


Let $\Stab_{\Sp(2g,\Z)}(b_g)$ be the subgroup of $\Sp(2g,\Z)$ consisting of the (matrices of) automorphisms of $H_1(S;\Z)$ which  fix the homology class $b_g=[\beta_g]$, and  
\[\Stab_{\Sp(2g,\Z)[m]}(b_g)=\Sp(2g,\Z)[m]\cap\Stab_{\Sp(2g,\Z)}(b_g).\]
Observe that we have an isomorphism
\[\Stab_{\Sp(2g,\Z)}(b_g)/\Stab_{\Sp(2g,\Z)[m]}(b_g)\cong\Stab_{\Sp(2g,\Z_m)}(\overline{b_g}),\]
where $\overline{b_g}$ is the homology class in $H_1(S;\Z_m)$ of the curve $\beta_g$.

\begin{lemma}\label{mtwist2}
For $m\le 2$ and $g\ge 3$ every element of $\Stab_{\Sp(2g,\Z)[m]}(b_g)$ is induced by a product of 
$m$-th powers of Dehn twists about nonseparating curves on $S$ disjoint from $\beta_g$.
\end{lemma}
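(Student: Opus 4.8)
The plan is to mimic the proof of Lemma \ref{mtwists1}, using two observations: the simple closed curves in $S$ disjoint from $\beta_g$ are exactly those lying in the cupping subsurface $\Sigma=i(\Sigma)$, and the stabilizer of $b_g$ surjects, via its action on the symplectic quotient $W=\{h\in H_1(S;\Z):i(h,b_g)=0\}\big/\langle b_g\rangle$ of rank $2g-2$, onto $\Sp(2g-2,\Z)[m]$ with a Heisenberg kernel. First I would record the homological setup. The inclusion $\Sigma\hookrightarrow S$ induces an isomorphism $H_1(\Sigma;\Z)\xrightarrow{\ \cong\ }(b_g)^{\perp}:=\{h\in H_1(S;\Z):i(h,b_g)=0\}$ carrying the two boundary classes of $\Sigma$ to $\pm b_g$; this is immediate from a rank count once one realises $a_1,\dots,a_{g-1},b_1,\dots,b_{g-1},b_g$ by disjoint simple closed curves in $\Sigma$ (the standard handle curves together with a boundary--parallel curve $\delta_1$). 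Hence a class in $(b_g)^{\perp}$ is realised by a simple closed curve disjoint from $\beta_g$ as soon as it is realised by one in $\Sigma$, and for all the specific classes used below --- $a_i,b_i$ ($i\le g-1$), $a_i+a_j,b_i+b_j$ ($i<j\le g-1$), $b_g$, and $a_i+b_g,b_i+b_g$ ($i\le g-1$) --- this is clear (disjoint curves, or a single band sum of two disjoint ones), each such curve being nonseparating in $S$ since its class is nonzero.

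The core is the short exact sequence
\[1\longrightarrow U[m]\longrightarrow\Stab_{\Sp(2g,\Z)[m]}(b_g)\stackrel{p}{\longrightarrow}\Sp(2g-2,\Z)[m]\longrightarrow 1,\]
where $p$ is the action on $W$ (a symplectic lattice of rank $2g-2$ with symplectic basis the images of $a_i,b_i$, $1\le i\le g-1$) and $U[m]=\ker p$ consists of the level $m$ elements fixing $b_g$ and acting trivially on $W$. Surjectivity of $p$ follows by extending an element of $\Sp(2g-2,\Z)[m]$ acting on $\langle a_1,\dots,a_{g-1},b_1,\dots,b_{g-1}\rangle$ by the identity on $\langle a_g,b_g\rangle$. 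One checks directly from the definitions in the text that $A^g_{ij}(m)$ and $B^g_{ij}(m)$ with $1\le i,j\le g-1$ lie in $\Stab_{\Sp(2g,\Z)[m]}(b_g)$, that they are exactly the block--diagonal lifts of the Bass--Milnor--Serre generators $A^{g-1}_{ij}(m),B^{g-1}_{ij}(m)$ of $\Sp(2g-2,\Z)[m]$ from Theorem \ref{Bass} --- here the hypothesis $g\ge 3$ is used, so that $g-1\ge 2$ --- and that, by the explicit formulas in the proof of Lemma \ref{mtwists1}, each is induced by a product of $m$-th powers of Dehn twists about curves ($\alpha_i,\beta_i$, and curves carrying $a_i+a_j$, $b_i+b_j$) that can all be taken inside $\Sigma$, hence disjoint from $\beta_g$.

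It remains to treat the kernel $U[m]$, which is a congruence sublattice of the $(2g-1)$-dimensional integral Heisenberg group $U$. Writing $t_c$ for the transvection along $c\in(b_g)^{\perp}$, the group $U$ is generated by the $2(g-1)$ elements $u_{a_i}=t_{a_i+b_g}t_{a_i}^{-1}$, $u_{b_i}=t_{b_i+b_g}t_{b_i}^{-1}$ ($1\le i\le g-1$), which project trivially under $p$, together with the central element $t_{b_g}$; a direct computation gives $u_c^{\,m}=t_{c+b_g}^{\,m}t_c^{-m}$, and one verifies that $U[m]=\langle\, t_{b_g}^{\,m},\ u_{a_i}^{\,m},\ u_{b_i}^{\,m}\ :\ 1\le i\le g-1\,\rangle$ (the commutators of these $m$-th powers are central powers of $t_{b_g}^{\,m}$, so this subgroup already exhausts the level $m$ sublattice of $U$). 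Now $t_{b_g}^{\,m}=\overline{T_{\delta_1}^{\,m}}$ and $u_{a_i}^{\,m}=\overline{T_{\varepsilon_i}^{\,m}T_{\alpha_i}^{-m}}$, $u_{b_i}^{\,m}=\overline{T_{\zeta_i}^{\,m}T_{\beta_i}^{-m}}$, where $[\varepsilon_i]=a_i+b_g$ and $[\zeta_i]=b_i+b_g$ are realised by curves in $\Sigma$ (band sums of a handle curve with $\delta_1$); so every generator of $U[m]$ is induced by a product of $m$-th powers of Dehn twists about nonseparating curves disjoint from $\beta_g$. Since a generating set of the kernel together with lifts of a generating set of the quotient generate an extension, the lemma follows.

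The main obstacle is the structural part of the second paragraph: correctly identifying $p$ together with its Heisenberg kernel, and checking that the chosen lifts of the Bass--Milnor--Serre generators genuinely stabilise $b_g$ and are realised by twists supported in $\Sigma$. This is bookkeeping with the explicit symplectic matrices $A^g_{ij}(m),B^g_{ij}(m)$, but it is what lets one reduce to the closed/full case already handled in Lemma \ref{mtwists1}, and it is also where the hypothesis $g\ge 3$ is forced (one needs Theorem \ref{Bass} applicable to $\Sp(2g-2,\Z)[m]$).
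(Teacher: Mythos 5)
Your proof is correct, but it follows a genuinely different route from the paper's. You present $\Stab_{\Sp(2g,\Z)[m]}(b_g)$ as an extension of $\Sp(2g-2,\Z)[m]$ (its action on $(b_g)^{\perp}/\langle b_g\rangle$) by a level~$m$ sublattice $U[m]$ of the integral Heisenberg group, lift the Bass--Milnor--Serre generators block-diagonally, and generate the kernel by the explicit $m$-th powers $t_{b_g}^{m}$, $u_{a_i}^{m}$, $u_{b_i}^{m}$. The one step needing care is the claim that these exhaust $U[m]$: it works because on the kernel the level~$m$ condition amounts exactly to divisibility by $m$ of both the image in $U/Z(U)$ and the central coordinate, and any word in the $u^{m}$'s realizing a prescribed element of $m\langle a_i,b_i\rangle$ automatically has central defect in $m\Z$, correctable by powers of $t_{b_g}^{m}$; your parenthetical addresses this, and the computation checks out. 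The paper argues element by element instead: for $A$ in the stabiliser one has $Aa_g=a_g+m(xb_g+yh'')$ with $h''$ primitive in $H_1(S';\Z)$, where $S'$ is the complement of a neighbourhood of $\alpha_g\cup\beta_g$; composing with $\overline{T_{\beta_g}^{m(x+y)}T_{\delta}^{my}}$, where $[\delta]=h''-b_g$, returns $a_g$ to itself, after which the product preserves the splitting $\langle a_g,b_g\rangle\oplus H_1(S';\Z)$ and restricts to an element of $\Sp(2g-2,\Z)[m]$, disposed of by Lemma \ref{mtwists1} applied to $S'=S_{g-1,1}$. Both arguments invoke the rank $2g-2$ case at the same point, which is where $g\ge 3$ is needed. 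The paper's correction step in effect produces a representative of your Heisenberg coset in one stroke, avoiding any analysis of the kernel; your version is longer but more structural, making the extension $1\to U[m]\to\Stab_{\Sp(2g,\Z)[m]}(b_g)\to\Sp(2g-2,\Z)[m]\to 1$ and its generators explicit.
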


\begin{proof} Let $N$ be a regular neighbourhood of $\alpha_g\cup\beta_g$ and $S'=S\backslash N$. Note that $S'$ is a copy of $S_{g-1,1}$  and we have a symplectic splitting \[H_1(S;\Z)=H_1(N;\Z)\oplus H_1(S';\Z),\]
where $H_1(N;\Z)=\lr{a_g,b_g}$.
Let $A\in \Stab_{\Sp(2g,\Z)[m]}(b_g)$ and consider $h=Aa_g$. We have
\[i(h,b_g)=i(Aa_g,Ab_g)=i(a_g,b_g)=1,\]
thus
\[h=a_g+m(xb_g+h'),\] where $x\in\Z$, $h'\in H_1(S';\Z)$. Let $h''$ be a primitive element of $H_1(S';\Z)$ such that $h'=yh''$ for $y\in\Z$ (if $h'=0$, then $y=0$ and $h''$ is arbitrary) and let $\gamma$ be any nonseparating simple closed curve in $S'$ such that $h''=[\gamma]$. 
Let $\delta$ be a simple closed curve disjoint from $\beta_g$ and such that
$[\delta]=[\gamma]-[\beta_g]$ (Figure \ref{bg}). We have
$\overline{T^{m(x+y)}_{\beta_g}T^{my}_\delta}(h)=a_g$. Since $\overline{T^{m(x+y)}_{\beta_g}T^{my}_\delta}A$ is equal to the identity on $\lr{a_g,b_g}$, thus it restricts to a symplectic automorphism of $H_1(S';\Z)$
and hence it is induced by a homeomorphism of $S'$. If $m=2$ then this homeomorphism may be taken to be a product of squares of Dehn twists about nonseparating curves on $S'$ by Lemma \ref{mtwists1} (here we use the assumption $g\ge 3$). Since $T_{\beta_g}$ can be seen as a twist about a curve isotopic to $\beta_g$ and disjoint from $\beta_g$,  $A$ is induced by  a product of $m$-th powers of Dehn twists about nonseparating curves on $S$ disjoint from $\beta_g$.\end{proof}

\begin{figure}
\input{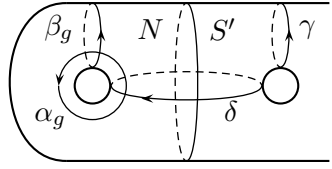}
\caption{\label{bg}Curves from the proof of Lemma \ref{mtwist2}.}
\end{figure}

We define level $2$ subgroup of $\M(\Sigma)$ as
\[\Gamma_2(\Sigma)=i^{-1}_\ast(\Gamma_2(S)).\]
Consider the map $\Gamma_2(\Sigma)\to\Stab_{\Sp(2g,\Z)[2]}(b_g)$ given by
$f\mapsto\overline{i_\ast(f)}$. By Lemma \ref{mtwist2} this map is surjective, because every simple closed curve in $S$ disjoint from $\beta_g$ is isotopic to a curve in $i(\Sigma)$. We have an exact sequence
\[1\longrightarrow\calI(\Sigma)\longrightarrow\Gamma_2(\Sigma)\longrightarrow
\Stab_{\Sp(2g,\Z)[2]}(b_g)\longrightarrow 1,\]
which together with Lemma \ref{mtwist2} and Theorem \ref{P} gives the following.

\begin{cor}\label{gen2}
Let $g\ge 3$ and $\Sigma=S_{g-1,2}$. The group $\Gamma_2(\Sigma)$ is generated by Dehn twists about bounding curves, bounding pair maps and squares of Dehn twists about nonseparating curves.
\end{cor}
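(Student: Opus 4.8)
The plan is to extract the generating set directly from the exact sequence
\[1\longrightarrow\calI(\Sigma)\longrightarrow\Gamma_m(\Sigma)\longrightarrow
\Stab_{\Sp(2g,\Z)[m]}(b_g)\longrightarrow 1\]
by combining the known generators of its outer terms. First I would recall that the middle and right terms are related by the quotient $\Gamma_m(\Sigma)/\calI(\Sigma)\cong\Stab_{\Sp(2g,\Z)[m]}(b_g)$, so that any subset of $\Gamma_m(\Sigma)$ whose image generates the quotient, together with generators of the kernel $\calI(\Sigma)$, will generate all of $\Gamma_m(\Sigma)$. This is the standard lifting argument for a short exact sequence of groups.

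The two inputs are already in place. By Theorem \ref{P}, $\calI(\Sigma)$ is generated by Dehn twists about bounding curves and bounding pair maps (in $\Sigma$), and these are among the generators claimed in the statement. For the quotient, Lemma \ref{mtwist2} shows that every element of $\Stab_{\Sp(2g,\Z)[m]}(b_g)$ is induced by a product of $m$-th powers of Dehn twists about nonseparating curves in $S$ disjoint from $\beta_g$; since $S\setminus i(\Sigma)$ is an annulus with core $\beta_g$, each such curve is isotopic to a curve lying in $i(\Sigma)$, so each of these $m$-th powers is $i_\ast$ of an $m$-th power of a Dehn twist about a nonseparating curve in $\Sigma$, and these elements of $\Gamma_m(\Sigma)$ project onto a generating set of $\Stab_{\Sp(2g,\Z)[m]}(b_g)$. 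Putting the two together: given $f\in\Gamma_m(\Sigma)$, multiply $f$ by a suitable word in $m$-th powers of twists about nonseparating curves so that the product lies in $\calI(\Sigma)$, then express that product via Theorem \ref{P}. Hence $\Gamma_m(\Sigma)$ is generated by Dehn twists about bounding curves, bounding pair maps, and $m$-th powers of Dehn twists about nonseparating curves, as claimed.

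The only point requiring a small amount of care is the passage from curves in $S$ disjoint from $\beta_g$ to curves genuinely supported in $i(\Sigma)$, and the corresponding identification of the lift in $\M(\Sigma)$ with an element of $\Gamma_m(\Sigma)$; this is exactly the remark already made before the statement that every simple closed curve in $S$ disjoint from $\beta_g$ is isotopic to a curve in $i(\Sigma)$, so it presents no real obstacle. I expect the whole argument to be a short diagram-chase of two or three lines, with Lemma \ref{mtwist2}, Theorem \ref{P}, and the exact sequence doing all the work.
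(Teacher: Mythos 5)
Your proposal is correct and follows essentially the same route as the paper: the paper likewise deduces the corollary from the exact sequence $1\to\calI(\Sigma)\to\Gamma_m(\Sigma)\to\Stab_{\Sp(2g,\Z)[m]}(b_g)\to 1$, using Theorem \ref{P} for the kernel and Lemma \ref{mtwist2} (with the observation that curves in $S$ disjoint from $\beta_g$ can be isotoped into $i(\Sigma)$) for the quotient. Nothing is missing.
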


\begin{prop}\label{quot}
The quotient group $\M(\Sigma)/\Gamma_2(\Sigma)$ is isomorphic to the stabiliser
$\Stab_{\Sp(2g,\Z_2)}(\overline{b_g})$.
\end{prop}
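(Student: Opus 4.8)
The plan is to identify $\M(\Sigma)/\Gamma_m(\Sigma)$ with the quotient $\Stab_{\Sp(2g,\Z)}(b_g)/\Stab_{\Sp(2g,\Z)[m]}(b_g)$ and then invoke the isomorphism $\Stab_{\Sp(2g,\Z)}(b_g)/\Stab_{\Sp(2g,\Z)[m]}(b_g)\cong\Stab_{\Sp(2g,\Z_m)}(\overline{b_g})$ recorded before Lemma \ref{mtwist2}. To this end I would consider the homomorphism $\rho\colon\M(\Sigma)\to\Sp(2g,\Z)$ given by $\rho(f)=\overline{i_\ast(f)}$. Since $i_\ast(f)$ fixes $\beta_g$ together with its orientation, $\rho(f)$ fixes $b_g=[\beta_g]$, so $\rho$ takes values in $\Stab_{\Sp(2g,\Z)}(b_g)$. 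As $\calI(S)$ is the kernel of the symplectic representation $\M(S)\to\Sp(2g,\Z)$ and $\Gamma_m(S)$ is the preimage of $\Sp(2g,\Z)[m]$ under it, the definitions $\calI(\Sigma)=i_\ast^{-1}(\calI(S))$ and $\Gamma_m(\Sigma)=i_\ast^{-1}(\Gamma_m(S))$ give immediately $\ker\rho=\calI(\Sigma)$ and $\rho^{-1}(\Sp(2g,\Z)[m])=\Gamma_m(\Sigma)$; in particular $\calI(\Sigma)\subseteq\Gamma_m(\Sigma)$, and since $\rho$ lands in $\Stab_{\Sp(2g,\Z)}(b_g)$ also $\rho^{-1}(\Stab_{\Sp(2g,\Z)[m]}(b_g))=\Gamma_m(\Sigma)$.

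The main step, which I expect to be the real obstacle, is to prove that $\rho$ is onto $\Stab_{\Sp(2g,\Z)}(b_g)$; equivalently, since $i_\ast(\M(\Sigma))=\Stab_{\M(S)}(\beta_g)$, that the symplectic representation of $\M(S)$ carries $\Stab_{\M(S)}(\beta_g)$ onto $\Stab_{\Sp(2g,\Z)}(b_g)$. Given $A\in\Stab_{\Sp(2g,\Z)}(b_g)$, I would use the surjectivity of $\M(S)\to\Sp(2g,\Z)$ to pick $\widetilde f\in\M(S)$ with $\overline{\widetilde f}=A$. Then $\widetilde f(\beta_g)$ is a nonseparating simple closed curve with $[\widetilde f(\beta_g)]=Ab_g=b_g$. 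Invoking the well-known fact that the Torelli group acts transitively on isotopy classes of oriented nonseparating simple closed curves representing a fixed homology class, one finds $h\in\calI(S)$ with $h(\widetilde f(\beta_g))=\beta_g$ (as oriented curves; since $b_g$ is non-torsion and $\overline{h\widetilde f}(b_g)=b_g$, the composite $h\widetilde f$ cannot reverse the orientation of $\beta_g$). Hence $h\widetilde f\in\Stab_{\M(S)}(\beta_g)=i_\ast(\M(\Sigma))$; writing $h\widetilde f=i_\ast(f')$ with $f'\in\M(\Sigma)$ gives $\rho(f')=\overline{h}\,\overline{\widetilde f}=A$. (Alternatively, one avoids the transitivity statement by using that $\Stab_{\Sp(2g,\Z)}(b_g)$ is generated by transvections about primitive classes $w$ with $i(w,b_g)=0$; each such transvection equals $(\overline{T_\gamma})^{\pm 1}$ for a simple closed curve $\gamma$ with $[\gamma]=w$, which by the change of coordinates principle may be chosen disjoint from $\beta_g$, so that $T_\gamma\in\Stab_{\M(S)}(\beta_g)$.)

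Finally, recall from the exact sequence $1\to\calI(\Sigma)\to\Gamma_m(\Sigma)\to\Stab_{\Sp(2g,\Z)[m]}(b_g)\to 1$ established above (its right map being $f\mapsto\rho(f)$, surjective by Lemma \ref{mtwist2}) that $\rho(\Gamma_m(\Sigma))=\Stab_{\Sp(2g,\Z)[m]}(b_g)$. Since $\ker\rho=\calI(\Sigma)\subseteq\Gamma_m(\Sigma)$ and $\rho$ is onto $\Stab_{\Sp(2g,\Z)}(b_g)$, $\rho$ induces an isomorphism
\[\M(\Sigma)/\Gamma_m(\Sigma)\;\xrightarrow{\ \cong\ }\;\Stab_{\Sp(2g,\Z)}(b_g)/\Stab_{\Sp(2g,\Z)[m]}(b_g),\]
and composing with the isomorphism $\Stab_{\Sp(2g,\Z)}(b_g)/\Stab_{\Sp(2g,\Z)[m]}(b_g)\cong\Stab_{\Sp(2g,\Z_m)}(\overline{b_g})$ noted before Lemma \ref{mtwist2} completes the proof.
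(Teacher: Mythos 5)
Your proposal is correct and follows essentially the same route as the paper: the key step in both is to lift an arbitrary $A\in\Stab_{\Sp(2g,\Z)}(b_g)$ to $\Stab_{\M(S)}(\beta_g)$ by first choosing any $\widetilde f$ with $\overline{\widetilde f}=A$ and then correcting by a Torelli element taking $\widetilde f(\beta_g)$ back to $\beta_g$ (the paper cites Lemma 6.2 of Putman for exactly this transitivity), after which the statement follows from the isomorphism theorems and the identification $\Stab_{\Sp(2g,\Z)}(b_g)/\Stab_{\Sp(2g,\Z)[m]}(b_g)\cong\Stab_{\Sp(2g,\Z_m)}(\overline{b_g})$. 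Your packaging via the single homomorphism $\rho$ rather than a chain of quotients, and your alternative transvection argument, are only cosmetic variations.
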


\begin{proof} 
The map $\Stab_{\M(S)}(\beta_g)\to\Stab_{\Sp(2g,\Z)}(b_g)$ is surjective
by Lemma \ref{mtwist2} for $m=1$.
We have
\[\Stab_{\Sp(2g,\Z)}(b_g)\cong\frac{\Stab_{\M(S)}(\beta_g)}{\calI(S)\cap\Stab_{\M(S)}(\beta_g)}
\cong\frac{\M(\Sigma)}{\calI(\Sigma)},\]
where the last isomorphism is induced by $i_\ast\colon\M(\Sigma)\to\Stab_{\M(S)}(\beta_g)$.
We have a series of isomorphisms
\[\frac{\M(\Sigma)}{\Gamma_2(\Sigma)}\cong
\frac{\M(\Sigma)/\calI(\Sigma)}{\Gamma_2(\Sigma)/\calI(\Sigma)}\cong
\frac{\Stab_{\Sp(2g,\Z)}(b_g)}{\Stab_{\Sp(2g,\Z)[2]}(b_g)}\cong
\Stab_{\Sp(2g,\Z_2)}(\overline{b_g}).
\]
\end{proof}

\section{The action of $\M(N)$ on $H_1(N;\Z_2)$.}\label{s5}
In this section we assume that $N=N_g$ is a closed nonorientable surface of genus $g\ge 2$. Recall that $H_1(N;\Z)$ is generated by $\{c_1,\dots,c_g\}$, where
$c_i=[\mu_i]$ for $1\le i\le g$ are the homology classes of the one-sided curves $\mu_i$ (Figure \ref{mui}). As a $\Z$-module it has the following presentation
\begin{equation}\label{pres}
H_1(N;\Z)=\lr{c_1,\dots,c_g\,|\,2(c_1+\cdots+c_g)=0}.
\end{equation}
The $\Z_2$-homology classes $\overline{c_i}$ of the curves
$\mu_i$ form a basis of $H_1(N;\Z_2)$, which is a vector space over $\Z_2$ of dimension $g$. The $\Z_2$-valued intersection pairing is a symmetric bilinear  form $\lr{\,{,}\,}$ on $H_1(N;\Z_2)$ satisfying $\lr{\overline{c_i},\overline{c_j}}=\delta_{ij}$ for $1\le i,j\le g$.
The following three theorems were proved by McCarthy and Pinkall \cite{McCP}.

\begin{theorem}[{\cite[Theorem 1]{McCP}}]\label{MCP1}
Every automorphism of $H_1(N;\Z)$ which preserves the $\Z_2$-valued intersection pairing is induced by a homeomorphism of $N$.
\end{theorem}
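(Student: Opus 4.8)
The plan is to compute the image of the homology representation $\rho\colon\M(N)\to\mathrm{Aut}(H_1(N;\Z))$ and show it equals the group $G$ of all automorphisms of $H_1(N;\Z)$ whose reduction mod $2$ preserves $\lr{\,{,}\,}$. The inclusion $\rho(\M(N))\subseteq G$ is immediate, since the mod $2$ intersection pairing is a topological invariant and $\rho(h)$ respects the integral lattice. For the converse, observe first that any $\phi\in G$ fixes the torsion element $t=c_1+\cdots+c_g$, as it is the unique nonzero torsion class of $H_1(N;\Z)$; hence the reduction $\overline\phi$ lies in the stabiliser $O(V)_{\overline t}$ of $\overline t=\overline{c_1}+\cdots+\overline{c_g}$ in the orthogonal group $O(V)=\mathrm{Aut}(V,\lr{\,{,}\,})$, where $V=H_1(N;\Z_2)$. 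Reduction mod $2$ thus gives a short exact sequence
\[1\longrightarrow K\longrightarrow G\longrightarrow O(V)_{\overline t}\longrightarrow 1,\]
and a short computation with the presentation (\ref{pres}) identifies the kernel $K$ --- the automorphisms acting trivially on $H_1(N;\Z_2)$ --- with the level $2$ congruence subgroup $\mathrm{GL}(g-1,\Z)[2]$ acting on the torsion-free quotient $F=H_1(N;\Z)/\mathrm{tors}\cong\Z^{g-1}$ (and fixing $t$). So it suffices to realise $O(V)_{\overline t}$ up to $K$, and then $K$ itself, by homeomorphisms of $N$.

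For the first task: every nonzero isotropic $v\in V$ (a vector of even weight with respect to $\overline{c_1},\dots,\overline{c_g}$) automatically satisfies $\lr{v,\overline t}=0$, so the transvection $x\mapsto x+\lr{x,v}v$ fixes $\overline t$, and by the structure theory of orthogonal groups over $\Z_2$ (when $g$ is odd, $\overline t^{\perp}$ is symplectic and $O(V)_{\overline t}\cong\Sp(g-1,\Z_2)$) these transvections generate $O(V)_{\overline t}$. Each such $v$ is the $\Z_2$-reduction of the class of a two-sided simple closed curve $\delta$ --- for instance a curve encircling an even number of crosscaps, with the transpositions $(i\ i{+}1)$ realised by the $\alpha_i$ of Figure \ref{alphai} or by the crosscap transpositions $U_i$ --- and then $\tau_v$ is the mod $2$ action of $T_\delta$. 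Hence there is $h_1\in\M(N)$ with $\overline{\rho(h_1)}=\overline\phi$. (Alternatively one may bypass the orthogonal-group statement and use Chillingworth's explicit generators from Theorem \ref{Ch}, reading off the $\Z_2$-classes of $\alpha_i$ and $\beta_j$ from the figures.)

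For the second task we realise $K\cong\mathrm{GL}(g-1,\Z)[2]$. By the Bass--Milnor--Serre solution of the congruence subgroup problem for $\mathrm{SL}$ --- the $\mathrm{SL}_{g-1}$ analogue of Theorem \ref{Bass}, with the explicit free generators of $\mathrm{SL}(2,\Z)[2]$ used when $g-1\le 2$ --- the group $\mathrm{SL}(g-1,\Z)[2]$ is generated by the matrices $I+2E_{ij}$, $i\ne j$, and $\mathrm{GL}(g-1,\Z)[2]$ by these together with $-I$ (needed only for $g=3$) and one matrix of determinant $-1$. The matrices $I+2E_{ij}$ are induced by products of squares of Dehn twists about two-sided simple closed curves, by a computation parallel to Lemma \ref{mtwists1} (squares of such twists act trivially on $H_1(N;\Z_2)$, hence lie in $K$); the element $-I$ on $F$ is induced by the reflection $R$; and a determinant $-1$ element of $K$ is induced by a single crosscap slide $Y_i$, whose action on $F$ reduces to the identity mod $2$ and has determinant $-1$ because $Y_i$ reverses the orientation of $\mu_i$ inside the supporting Klein bottle with a hole. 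Hence $K\subseteq\rho(\M(N))$, so $\rho(h_1)^{-1}\phi\in K\subseteq\rho(\M(N))$ and therefore $\phi\in\rho(\M(N))$, proving $\rho(\M(N))=G$. The main obstacle I expect is the bookkeeping in this last step: identifying $K$ precisely with $\mathrm{GL}(g-1,\Z)[2]$ through the torsion relation in (\ref{pres}), and checking that both the elementary and the determinant $-1$ cosets are genuinely hit; the orthogonal-group generation input in the middle paragraph is the other point to pin down with a reference.
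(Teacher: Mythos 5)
First, a point of comparison: the paper offers no proof of this statement at all --- it is quoted verbatim from McCarthy--Pinkall \cite{McCP}, so there is no internal argument to measure yours against, and I can only assess your proposal on its own terms. Your overall architecture --- reduce mod $2$, realize the induced automorphism of $H_1(N;\Z_2)$ by Dehn twists via transvections, then realize the kernel $K$ of automorphisms acting trivially mod $2$ after identifying it with $\mathrm{GL}(g-1,\Z)[2]$ --- is reasonable and close in spirit to what McCarthy--Pinkall actually do. The first two steps are essentially sound: the identification of $K$ with $\mathrm{GL}(g-1,\Z)[2]$ is exactly what the paper later quotes from \cite{McCP}, every element of the orthogonal group of $\lr{\,{,}\,}$ automatically fixes the characteristic vector $\overline{t}$, the isotropic classes are precisely the classes of two-sided simple closed curves, and the parabolic/transvection generation fact you flag for even $g$ is true and citable.

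The genuine gap is in the last step. The assertion that $\mathrm{SL}(n,\Z)[2]$ (for $n=g-1\ge 3$) is generated by the elementary matrices $I+2E_{ij}$, $i\ne j$, is false, and it is not what Bass--Milnor--Serre prove: their theorem gives generation by the \emph{conjugates} of these elementaries (the relative elementary subgroup), i.e.\ by all $I+2vw^{T}$ with $w^{T}v=0$. Concretely, $I+2A\mapsto A\bmod 2$ is a homomorphism $\mathrm{SL}(n,\Z)[2]\to M_n(\Z_2)$ which sends your proposed generators into the span of the off-diagonal matrices, while $\mathrm{diag}(-1,-1,1,\dots,1)$ maps to $E_{11}+E_{22}$; so that matrix is not a product of your generators. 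Adjoining $-I$ and a single determinant $-1$ crosscap-slide matrix does not repair this for $g\ge 4$: the subgroup you actually generate has index at least $2^{g-3}$ in $\mathrm{GL}(g-1,\Z)[2]$, so you realize only a proper subgroup of $K$ and the final conclusion $\phi\in\rho(\M(N))$ does not follow. To close the gap you must either realize the full Bass--Milnor--Serre generating set (all level-$2$ transvections $I+2vw^{T}$ of $R_g$, not just the coordinate ones) by homeomorphisms, or adjoin all the sign automorphisms arising from the various crosscap slides $Y_i$ and prove that the enlarged set generates $\mathrm{GL}(g-1,\Z)[2]$ --- and this is precisely where the real content of McCarthy--Pinkall's Theorems 1 and 3 lies, so it cannot be waved through as bookkeeping.
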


\begin{theorem}[{\cite[Theorem 2]{McCP}}]\label{MCP2}
Every automorphism of $H_1(N;\Z_2)$ which preserves the $\Z_2$-valued intersection pairing is induced by a homeomorphism of $N$.
\end{theorem}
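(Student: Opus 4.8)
The plan is to deduce Theorem~\ref{MCP2} from Theorem~\ref{MCP1} by lifting. Write $L=H_1(N;\Z)$ and $V=H_1(N;\Z_2)$, and let $\pi\colon L\to V$ be reduction modulo $2$. From the presentation (\ref{pres}) the torsion subgroup of $L$ is cyclic of order $2$, generated by $w'=c_1+\cdots+c_g$, while $L/\lr{w'}\cong\Z^{g-1}$ is free; hence the extension $0\to\lr{w'}\to L\to L/\lr{w'}\to 0$ splits and we may write $L=F\oplus\lr{w'}$ with $F$ free of rank $g-1$. Since $2L=2F$, reduction modulo~$2$ identifies $V$ with $(F/2F)\oplus\Z_2 w$, where $w=\pi(w')=\overline{c_1}+\cdots+\overline{c_g}$.

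The crucial point is that $w$ is canonically determined by the pairing. In characteristic~$2$ the map $x\mapsto\lr{x,x}$ is linear, so by nondegeneracy of $\lr{\,{,}\,}$ there is a unique $u\in V$ with $\lr{u,x}=\lr{x,x}$ for all $x\in V$; since $\lr{\overline{c_i},\overline{c_j}}=\delta_{ij}$ gives $\lr{w,\sum_i x_i\overline{c_i}}=\sum_i x_i=\lr{\sum_i x_i\overline{c_i},\sum_i x_i\overline{c_i}}$, we conclude $u=w$. Consequently every isometry $\bar f$ of $(V,\lr{\,{,}\,})$ carries $u$ to $u$, i.e.\ fixes $w$. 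In the decomposition $V=(F/2F)\oplus\Z_2 w$ this forces $\bar f$ to have block form $\bigl(\begin{smallmatrix}P&0\\R&1\end{smallmatrix}\bigr)$ with $P\in GL(F/2F)$ and $R\colon F/2F\to\Z_2$.

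Now the lift. As $\det P=1$, surjectivity of $SL_{g-1}(\Z)\to SL_{g-1}(\Z_2)$ provides $\widetilde P\in GL(F)$ reducing to $P$; choosing any homomorphism $\widetilde R\colon F\to\lr{w'}$ reducing to $R$, the automorphism $f=\bigl(\begin{smallmatrix}\widetilde P&0\\ \widetilde R&1\end{smallmatrix}\bigr)\in\mathrm{Aut}(L)$ satisfies $\pi\circ f=\bar f\circ\pi$. Since the $\Z_2$-valued intersection pairing on $L$ is the pullback along $\pi$ of the pairing on $V$ (two integral $1$-cycles meet mod~$2$ in a value depending only on their classes in $V$), and $\bar f$ preserves the latter, $f$ preserves the former. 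By Theorem~\ref{MCP1}, $f$ is induced by a homeomorphism $h$ of $N$; by naturality of reduction of coefficients, $h$ acts on $H_1(N;\Z_2)$ as $\bar f$, which proves the theorem.

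The substantive step is the middle one: recognising that every isometry of the mod~$2$ intersection form fixes the distinguished ``Wu vector'' $w$, and that $w$ is precisely the reduction of the order-$2$ torsion class of $H_1(N;\Z)$. This is exactly what makes the lift possible --- indeed the image of $\mathrm{Aut}(H_1(N;\Z))$ in $GL_g(\Z_2)$ is the stabiliser of $w$, so an isometry not fixing $w$ could never be realised integrally. The auxiliary facts (splitness of the torsion extension, surjectivity $SL_{g-1}(\Z)\twoheadrightarrow SL_{g-1}(\Z_2)$, naturality of mod~$2$ reduction) are routine.
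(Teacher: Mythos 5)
Your argument is correct. Note, however, that the paper offers no proof of this statement at all: Theorem~\ref{MCP2} is imported verbatim from McCarthy--Pinkall \cite{McCP}, as is Theorem~\ref{MCP1} which you use as your black box, so there is no internal proof to compare against. What you have supplied is a genuine derivation of Theorem~2 of \cite{McCP} from Theorem~1 of \cite{McCP}, and the logic is sound: the splitting $H_1(N;\Z)=F\oplus\lr{w'}$ exists because the quotient by the torsion subgroup is free; the identification of $w=\overline{c_1}+\cdots+\overline{c_g}$ as the Wu vector of the form (the unique $u$ with $\lr{u,x}=\lr{x,x}$) correctly shows every isometry fixes $w$, which is exactly the obstruction to lifting, since any automorphism of $H_1(N;\Z)$ must fix the unique order-two class $w'$ and hence its reduction $w$ (the paper itself records this fact at the end of Section~\ref{s5}); the block-triangular lift via surjectivity of $SL_{g-1}(\Z)\to SL_{g-1}(\Z_2)$ is an automorphism of $H_1(N;\Z)$ compatible with $\pi$; the $\Z_2$-valued pairing on $H_1(N;\Z)$ is by definition pulled back through $\pi$, so the lift is an isometry and Theorem~\ref{MCP1} applies; and the absence of a $\mathrm{Tor}$ correction term (as $H_0$ is free) makes the final naturality step legitimate. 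The one thing you are implicitly assuming is that the $\Z_2$-valued pairing appearing in Theorem~\ref{MCP1} is indeed the pullback of the one on $H_1(N;\Z_2)$ --- this is how it is defined in \cite{McCP}, so the reduction is valid, but it is the hinge on which the whole deduction turns and deserves to be stated as such rather than parenthetically.
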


\begin{theorem}[{\cite[Theorem 3]{McCP}}]\label{MCP3}
Every automorphism of $H_1(N;\Z)$ inducing a trivial automorphism of  $H_1(N;\Z_2)$ is induced by a homeomorphism of $N$ which is a product of crosscap slides.
\end{theorem}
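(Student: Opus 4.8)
\medskip
\noindent\textbf{Proof strategy.} Let $\bar\Y(N)$ be the image of $\Y(N)$ under the representation $\M(N)\to\mathrm{Aut}(H_1(N;\Z))$, and let $G_2=\ker\bigl(\mathrm{Aut}(H_1(N;\Z))\to\mathrm{Aut}(H_1(N;\Z_2))\bigr)$; the claim is that $\bar\Y(N)=G_2$. The inclusion $\bar\Y(N)\subseteq G_2$ is the easy direction: in the crosscap basis $c_1,\dots,c_g$ of Section \ref{s5} a crosscap slide $Y_{\mu_i,\alpha}$ with $\alpha$ meeting $\mu_i$ and $\mu_j$ transversely in one point sends $c_i\mapsto -c_i$ and $c_j\mapsto c_j+2c_i$ and fixes the remaining $c_k$ (the shear $2c_i$ is forced by compatibility with the relation $2(c_1+\cdots+c_g)=0$, and the coefficient $\pm2$ is confirmed by $Y_{\mu_i,\alpha}^2=T_{\bdr K}$, which acts trivially on $H_1(N;\Z)$ since $\bdr K$ is separating); reduced mod $2$ this is the identity. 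So the content is $G_2\subseteq\bar\Y(N)$: every automorphism of $H_1(N;\Z)$ trivial mod $2$ must be realised by a product of crosscap slides. (Such an automorphism is realised by \emph{some} homeomorphism by Theorem \ref{MCP1}, since it preserves the $\Z_2$-form trivially, so only realisation \emph{by crosscap slides} is at issue.)

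The first part of the plan is to describe $G_2$ algebraically. The torsion subgroup $T=\lr{c_1+\cdots+c_g}\cong\Z_2$ is characteristic, so every $\Phi\in G_2$ fixes $T$ and induces $\Phi_\ast\in\mathrm{GL}(g-1,\Z)$ on $H_1(N;\Z)/T\cong\Z^{g-1}$, lying in the level-$2$ congruence subgroup $\mathrm{GL}(g-1,\Z)[2]$. I would then check that $\Phi\mapsto\Phi_\ast$ identifies $G_2$ with $\mathrm{GL}(g-1,\Z)[2]$: it is onto (lift $M$ using a splitting $H_1(N;\Z)=T\oplus\Z^{g-1}$) and one-to-one, since its kernel consists of the shears $x\mapsto x+\tau(x)$ with $\tau\in\mathrm{Hom}(\Z^{g-1},\Z_2)$, and a nonzero such shear is not trivial mod $2$ (because $c_1+\cdots+c_g\notin 2H_1(N;\Z)$). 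Consequently it suffices to show that the action of $\bar\Y(N)$ on $H_1(N;\Z)/T$ surjects onto $\mathrm{GL}(g-1,\Z)[2]$: given $\Phi\in G_2$ and a product of crosscap slides $P$ with $\bar P_\ast=\Phi_\ast$, the automorphism $\bar P^{-1}\Phi$ lies in the trivial kernel of $G_2\to\mathrm{GL}(g-1,\Z)[2]$, whence $\Phi=\bar P\in\bar\Y(N)$.

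The second part is to realise generators of $\mathrm{GL}(g-1,\Z)[2]$ by crosscap slides. In $H_1(N;\Z)/T$ with basis $\bar c_1,\dots,\bar c_{g-1}$ (so $\bar c_g=-\sum_{i<g}\bar c_i$), for $i\ne j$ in $\{1,\dots,g-1\}$ the product of two crosscap slides $Y_{\mu_i,\alpha}Y_{\mu_i,\alpha'}$, with $\alpha$ meeting $\mu_i,\mu_j$ and $\alpha'$ meeting $\mu_i,\mu_g$, sends $c_i\mapsto c_i$, $c_j\mapsto c_j+2c_i$, $c_g\mapsto c_g-2c_i$ and fixes the other $c_k$, hence acts on $H_1(N;\Z)/T$ as the elementary congruence matrix $I+2E_{ij}$. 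Adjoining the determinant $-1$ matrix $\bar Y_1$ (which acts on the $(\bar c_1,\bar c_2)$-block as $\left(\begin{smallmatrix}-1&2\\0&1\end{smallmatrix}\right)$) and the matrix $-I_{g-1}$ coming from the reflection $R\in\Y(N)$ of Lemma \ref{RinY}, I claim $\{I+2E_{ij}\}\cup\{\bar Y_1,\,-I_{g-1}\}$ generates $\mathrm{GL}(g-1,\Z)[2]$. For $g\ge4$ we have $g-1\ge3$, the subgroup generated by the $I+2E_{ij}$ is already normal in $\mathrm{SL}(g-1,\Z)$ (a conjugate by an ambient elementary matrix can be rewritten inside it, routing through a third index), so it equals $\mathrm{SL}(g-1,\Z)[2]$ by the congruence subgroup property of Bass--Milnor--Serre \cite{BMS}, and $\bar Y_1$ accounts for the remaining determinant $-1$ coset. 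The cases $g=2$ (where $G_2\cong\{\pm1\}$ is already realised by $\bar Y_1$) and $g=3$ (where $\mathrm{SL}(2,\Z)[2]=\{\pm I_2\}\times\lr{I+2E_{12},\,I+2E_{21}}$, so $-I_2$ and $\bar Y_1$ must be adjoined by hand) I would dispatch separately.

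The step I expect to be the main obstacle is the generation statement for $\mathrm{GL}(g-1,\Z)[2]$. The tempting shortcut --- realise one $I+2E_{ij}$ by the explicit product of slides above, then obtain all conjugates $e_{ji}(r)(I+2E_{ij})e_{ji}(-r)$ by conjugating with a homeomorphism inducing the transvection $e_{ji}(r)$ --- fails for odd $r$, because $e_{ji}(r)$ does not preserve the $\Z_2$-intersection form and so is induced by no homeomorphism of $N$; one is therefore forced to invoke the congruence subgroup property, exactly as the paper invokes \cite{BMS} for $\Sp$. The accompanying chores --- pinning down the homological action of a crosscap slide together with the torsion bookkeeping, and the low-dimensional linear algebra for $g=2,3$ --- are routine but must be carried out with care.
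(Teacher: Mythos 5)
The paper does not actually prove this statement---it is quoted from McCarthy--Pinkall \cite{McCP}---so your attempt can only be judged on its own terms. Your overall reduction is sound and matches the setup the paper attributes to \cite[Section 2]{McCP}: identifying $G_2=\ker\bigl(\mathrm{Aut}(H_1(N;\Z))\to\mathrm{Aut}(H_1(N;\Z_2))\bigr)$ with $\mathrm{GL}(g-1,\Z)[2]$ via the torsion-free quotient $R_g$ is correct (your injectivity argument, that a nonzero shear by $c=c_1+\cdots+c_g$ survives mod $2$ because $c\notin 2H_1(N;\Z)$, is right), and so is the computation that a pair of slides based at $\mu_i$ induces $e_{ij}(2):=I+2E_{ij}$ on $R_g$. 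The theorem therefore reduces, as you say, to showing that the image of $\Y(N)$ in $\mathrm{GL}(g-1,\Z)$ contains $\mathrm{GL}(g-1,\Z)[2]$.

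The gap is in that generation step, and it is fatal as written. Reduce modulo $4$: the map sending $I+2A$ to $A\bmod 2$ is a homomorphism from $\mathrm{GL}(n,\Z)[2]$ onto the additive group $M_n(\Z_2)$, and it sends $e_{ij}(2)$ to $E_{ij}$, $-I_n$ to $I_n$, and your $\overline{Y_1}$ to $E_{11}+E_{12}$. Hence on the diagonal coordinates your generators span only the two-dimensional subspace $\lr{(1,\dots,1),(1,0,\dots,0)}$, while $\mathrm{GL}(n,\Z)[2]$ hits all of $M_n(\Z_2)$ (e.g.\ $\mathrm{diag}(1,-1,1,\dots,1)\mapsto E_{22}$); so for $n=g-1\ge 3$ you generate a proper subgroup. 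The same obstruction refutes the normality claim: $e_{ji}(1)e_{ij}(2)e_{ji}(-1)\equiv I+2(E_{ii}+E_{ij}+E_{ji}+E_{jj})\pmod 4$ has nonzero diagonal part and therefore does not lie in $\lr{e_{kl}(2):k\ne l}$, whose mod-$4$ image is purely off-diagonal; ``routing through a third index'' handles conjugation by $e_{kl}(1)$ in every case except exactly $(k,l)=(j,i)$, which is the one you need. The repair lies in ingredients you already have: a single slide $Y_{\mu_i,\alpha}$ with $\alpha$ through crosscaps $i$ and $g$ induces $D_i=\mathrm{diag}(1,\dots,-1,\dots,1)$ on $R_g$. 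Adjoining all the $D_i$ fills the diagonal mod $4$ and makes the subgroup normal in $\mathrm{GL}(n,\Z)$ (one checks the identity $e_{ji}(1)e_{ij}(2)e_{ji}(-1)=D_iD_j\,e_{ji}(2)e_{ij}(-2)$ in the $(i,j)$ block), and only then does the Bass--Milnor--Serre/Mennicke statement that the normal closure of $e_{12}(2)$ is all of $\mathrm{SL}(n,\Z)[2]$ for $n\ge 3$ finish the argument.
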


By Theorem \ref{MCP2}, the natural map \[\M(N)\longrightarrow\Iso(H_1(N;\Z_2))\]
is surjective, where $\Iso(H_1(N;\Z_2))$ is the group of automorphisms of 
$H_1(N;\Z_2)$ which preserve $\lr{\,{,}\,}$.

Recall that a symmetric bilinear form $\Omega$ on a vector space $V$ over a field of characteristic 2 is called symplectic if it is nondegenerate  and $\Omega(v,v)=0$ for all $v\in V$. On a closed orientable surface $S_r$, the $\Z_2$-valued intersection pairing is a symplectic form on $H_1(S_r,\Z_2)$.
Let us fix a symplectic basis $\{\overline{a_1},\dots,\overline{a_r},\overline{b_1},\dots,\overline{b_r}\}$
of $H_1(S_r,\Z_2)$. We shell identify the group $\Sp(2r,\Z_2)$ with the group of symplectic automorphisms of $H_1(S_r,\Z_2)$. A proof of the following proposition can be found in \cite[Section 3]{KorkHom}.

\begin{prop}\label{order}
For $r\ge 1$, the group $\Iso(H_1(N_{2r+1},\Z_2))$ is isomorphic to the symplectic group $\Sp(2r,\Z_2)$, and its order is given by the formula
\[|\Iso(H_1(N_{2r+1},\Z_2))|=2^{r^2}\prod_{i=1}^r(2^{2i}-1),\]
the group $\Iso(H_1(N_{2r},\Z_2))$ is isomorphic to the stabiliser  $\Stab_{\Sp(2r,\Z_2)}(\overline{b_r})$, and its order is given by the formula
\[|\Iso(H_1(N_{2r},\Z_2))|=2^{r^2}\prod_{i=1}^{r-1}(2^{2i}-1).\]
\end{prop}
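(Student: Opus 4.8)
The plan is to identify $\Iso(H_1(N_g;\Z_2))$, which by the description $\lr{\overline{c_i},\overline{c_j}}=\delta_{ij}$ is the orthogonal group of the standard symmetric bilinear form on $\Z_2^g$, with the symplectic group $\Sp(2r,\Z_2)$ in the odd case and with a point stabiliser in it in the even case, the device being a vector fixed by every isometry. First I would use that we are in characteristic $2$: the map $q\colon H_1(N_g;\Z_2)\to\Z_2$, $q(v)=\lr{v,v}$, is \emph{linear}, so by nondegeneracy of $\lr{\,{,}\,}$ there is a unique $e$ with $\lr{e,v}=q(v)$ for all $v$. In the basis $\{\overline{c_i}\}$ one has $e=\overline{c_1}+\dots+\overline{c_g}$ and $\lr{e,e}=q(e)\equiv g\pmod 2$. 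For $\phi\in\Iso(H_1(N_g;\Z_2))$ and any $v$ we get $\lr{\phi(e),v}=\lr{e,\phi^{-1}(v)}=q(\phi^{-1}(v))=q(v)$, so $\phi(e)=e$ by uniqueness; hence every isometry preserves the hyperplane $e^\perp$, on which $q$ vanishes identically since $q(v)=\lr{e,v}=0$ there.

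In the odd case $g=2r+1$ we have $\lr{e,e}\equiv 1$, so $\lr{e}$ is a nondegenerate line and $H_1(N_{2r+1};\Z_2)=\lr{e}\perp e^\perp$; therefore $e^\perp$ is nondegenerate of dimension $2r$ and, $q$ being zero on it, it is a symplectic $\Z_2$-space, which I identify with the standard one underlying $\Sp(2r,\Z_2)$. Restriction $\phi\mapsto\phi|_{e^\perp}$ is a homomorphism $\Iso(H_1(N_{2r+1};\Z_2))\to\Sp(2r,\Z_2)$; it is injective because an isometry is determined by $\phi(e)=e$ together with $\phi|_{e^\perp}$, and surjective because a symplectic automorphism of $e^\perp$ extends by the identity on $\lr{e}$ to an isometry. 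Thus $\Iso(H_1(N_{2r+1};\Z_2))\cong\Sp(2r,\Z_2)$, whose order is the classical $2^{r^2}\prod_{i=1}^r(2^{2i}-1)$ (cited, or proved by induction on $r$ from the chain of stabilisers of a partial symplectic basis).

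In the even case $g=2r$ I would realise $H_1(N_{2r};\Z_2)$ as the nondegenerate subspace $V_0=\lr{\overline{c_1},\dots,\overline{c_{2r}}}$ of $H_1(N_{2r+1};\Z_2)$, noting $V_0^\perp=\lr{\overline{c_{2r+1}}}$ and $\lr{\overline{c_{2r+1}}}^\perp=V_0$. Extending isometries of $V_0$ by the identity on $\lr{\overline{c_{2r+1}}}$, and conversely restricting isometries of $H_1(N_{2r+1};\Z_2)$ that fix $\overline{c_{2r+1}}$, gives $\Iso(H_1(N_{2r};\Z_2))\cong\Stab_{\Iso(H_1(N_{2r+1};\Z_2))}(\overline{c_{2r+1}})$. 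Writing $\overline{c_{2r+1}}=e+u$ with $u=\overline{c_1}+\dots+\overline{c_{2r}}$ a nonzero vector of $e^\perp$, one checks that $\phi$ fixes $\overline{c_{2r+1}}$ iff $\phi|_{e^\perp}$ fixes $u$; so, through the isomorphism of the previous paragraph, $\Iso(H_1(N_{2r};\Z_2))$ is the stabiliser in $\Sp(2r,\Z_2)$ of a nonzero vector. Since $\Sp(2r,\Z_2)$ acts transitively on nonzero vectors, this stabiliser is conjugate, hence isomorphic, to $\Stab_{\Sp(2r,\Z_2)}(\overline{b_r})$, and its order is $|\Sp(2r,\Z_2)|/(2^{2r}-1)=2^{r^2}\prod_{i=1}^{r-1}(2^{2i}-1)$.

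I expect the only delicate point to be the characteristic-$2$ linear algebra at the outset: checking that $q$ is linear, that $e^\perp$ for odd $g$ is genuinely a nondegenerate symplectic space of dimension $g-1$, and keeping the parity identity $\lr{e,e}\equiv g$ straight. The transitivity of $\Sp(2r,\Z_2)$ on nonzero vectors and the order formula for $\Sp(2r,\Z_2)$ are standard and present no real obstacle.
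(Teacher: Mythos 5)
Your argument is correct: the characteristic-two observation that $v\mapsto\lr{v,v}$ is linear, the resulting canonical fixed vector $e=\overline{c_1}+\dots+\overline{c_g}$, and the parity split into $\lr{e}\perp e^\perp$ (odd $g$) versus a point stabiliser (even $g$) all check out, including the order computations. The paper gives no proof of its own here but defers to \cite[Section 3]{KorkHom}, and your proof is essentially the standard argument found there, so nothing further is needed.
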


We define level 2 subgroup $\Gamma_2(N)$ of $\M(N)$ to be the group of isotopy classes of homeomorphisms which act trivially on $H_1(N;\Z_2)$. If fits into an exact sequence
\[1\longrightarrow\Gamma_2(N)\longrightarrow\M(N)\longrightarrow\Iso(H_1(N;\Z_2))
\longrightarrow 1.\]

The following theorem is an improvement of Theorem \ref{MCP3}. It is the main result of this paper. 

\begin{theorem}\label{main} A mapping class $f\in\M(N)$ acts trivially on $H_1(N;\Z_2)$ if and only if $f$ is a product of crosscap slides.
\end{theorem}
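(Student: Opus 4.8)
\medskip\noindent\textbf{Proof proposal.}
The plan is to prove the equivalent equality $\Y(N)=\Gamma_2(N)$, where $\Gamma_2(N)$ is the level $2$ subgroup. The inclusion $\Y(N)\subseteq\Gamma_2(N)$ is the easy half: a crosscap slide $Y_{\mu,\alpha}$ is supported in a Klein bottle with a hole $K$, equals the identity on and outside $\partial K$, and fixes the $\Z_2$--homology classes carried by $K$ --- it only reverses the orientation of $\mu$, which is immaterial in $H_1(N;\Z_2)$ --- so it acts trivially on $H_1(N;\Z_2)$, and hence so does any product of crosscap slides. For the reverse inclusion I would first reduce to a Torelli--type statement. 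Let $\calI(N)=\ker\bigl(\M(N)\to\mathrm{Aut}\,H_1(N;\Z)\bigr)$. Given $f\in\Gamma_2(N)$, the automorphism $\overline f$ of $H_1(N;\Z)$ is trivial modulo $2$, so by Theorem \ref{MCP3} it is induced by some product of crosscap slides $g$; then $g^{-1}f\in\calI(N)$ and $f=g\,(g^{-1}f)$. Thus $\Gamma_2(N)=\Y(N)\cdot\calI(N)$, and it suffices to prove
\[\calI(N)\subseteq\Y(N).\]

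To prove this I would import the results of Section \ref{s4} through an orientable subsurface. An Euler characteristic count shows that $N=N_g$ can be written as $F\cup(\text{one or two M\"obius bands})$ with $F=S_{r,1}$ when $g=2r+1$ and with $F$ a copy of $\Sigma=S_{r-1,2}$ when $g=2r$; each boundary curve of $F$ bounds a M\"obius band in $N$, so the boundary twists of $F$ become trivial in $\M(N)$ and we obtain $i_\ast\colon\M(F)\to\M(N)$ whose image is the stabiliser of the M\"obius band(s). These are exactly the surfaces to which Theorems \ref{BP}, \ref{P} and Corollaries \ref{gen1}, \ref{gen2} apply, so $\calI(F)$ is generated by Dehn twists about separating curves of $F$ and bounding pair maps in $F$. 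Under $i_\ast$ a separating curve of $F$ maps to a separating curve of $N$ (one side of it in $N$ is orientable, the other contains a M\"obius band) and a bounding pair of $F$ maps to a bounding pair of $N$; hence by Lemmas \ref{separating} and \ref{bpinY}, $i_\ast(\calI(F))\subseteq\Y(N)$. (Carrying $\Gamma_2(F)$ across instead, Corollaries \ref{gen1} and \ref{gen2} also produce squares of twists about nonseparating curves of $F$, whose images are in $\Y(N)$ by Lemma \ref{square}, since a nonseparating two--sided curve of $F$ remains nonseparating and two--sided in $N$.)

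It then remains to check that $\calI(N)$ is generated by $i_\ast(\calI(F))$ together with the extra elements that move the crosscaps, and that these extra elements lie in $\Y(N)$. The crosscap motions supplying the missing generators come from the crosscap pushing map $\psi\colon\pi_1(N_{g-1},x_0)\to\M(N)$, whose image is contained in $\Y(N)$ by Corollary \ref{pushinY}, and --- where permutations of the crosscaps are needed --- from the reflection $R$, which lies in $\Y(N)$ by Lemma \ref{RinY}. To organise this I would use the exact sequences of Section \ref{s4} together with Propositions \ref{quot} and \ref{order}, which identify $\M(F)/\Gamma_2(F)$ and $\Iso(H_1(N;\Z_2))$ with the same symplectic group (or the same stabiliser inside it) over $\Z_2$; this comparison is what lets one control $\M(N)$ --- and in particular $\calI(N)$ --- modulo $i_\ast(\M(F))$, showing that every element of $\calI(N)$ is, up to a factor supported in $F$, a product of crosscap pushing maps and copies of $R$. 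Putting everything together yields $\calI(N)\subseteq\Y(N)$, and therefore $\Gamma_2(N)=\Y(N)\cdot\calI(N)=\Y(N)$, which is Theorem \ref{main}.

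I expect the main obstacle to be exactly that last step: passing from $i_\ast(\calI(F))\subseteq\Y(N)$ to $\calI(N)\subseteq\Y(N)$, i.e.\ showing that the part of $\calI(N)$ not visibly supported in the orientable subsurface $F$ --- the homologically trivial ways of permuting and dragging the crosscaps --- is absorbed by the crosscap pushing map and by $R$. Making this precise needs the homological bookkeeping with the symplectic stabilisers of Propositions \ref{quot} and \ref{order} and the Birman exact sequence for $\psi$, and the small--genus cases, in which $F$ has genus at most $1$ and the standing hypotheses of Section \ref{s4} do not apply, have to be handled separately.
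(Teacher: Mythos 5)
Your first half and your general strategy for the second half (push the generators of Corollaries \ref{gen1} and \ref{gen2} from an orientable subsurface $F$ into $N$ and absorb them into $\Y(N)$ via Lemmas \ref{square}, \ref{separating} and \ref{bpinY}) match the paper. But the reduction you perform first --- using Theorem \ref{MCP3} to write $\Gamma_2(N)=\Y(N)\cdot\calI(N)$ and then aiming at $\calI(N)\subseteq\Y(N)$ --- replaces the problem by one that is at least as hard and that your outline does not solve. The step you yourself flag as ``the main obstacle,'' namely that $\calI(N)$ is generated by $i_\ast(\calI(F))$ together with crosscap pushes and copies of $R$, is precisely a generating set for the Torelli group of a nonorientable surface relative to an orientable subsurface. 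No such result is available (Powell's and Putman's theorems are for orientable surfaces only), and neither Propositions \ref{quot} and \ref{order} nor the Birman exact sequence for $\psi$ produce it: those statements control quotients by $\Gamma_2$ and images of point-pushing, not the kernel of the action on integral homology. (Note also that $R\notin\calI(N)$, since $\overline{R}(c_i)=-c_i$, so your proposed generators do not even lie in the group you want to generate.) As written, the inclusion $\calI(N)\subseteq\Y(N)$ is asserted, not proved, and the detour through Theorem \ref{MCP3} buys nothing.

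The paper sidesteps the Torelli group of $N$ entirely by a counting argument, and this is the idea your proposal is missing. By Theorem \ref{Ch} and the normality of $\Y(N)$, the composite $j=p\circ i_\ast\colon\M(F)\to\M(N)/\Y(N)$ is \emph{surjective} (all of Chillingworth's generators except $Y_{g-1}$ are twists about curves in $F$, and $Y_{g-1}\in\Y(N)$). The lemmas you cite show $\Gamma_2(F)\subseteq\ker j$, whence
\[
[\M(N)\colon\Y(N)]=[\M(F)\colon\ker j]\le[\M(F)\colon\Gamma_2(F)],
\]
and Propositions \ref{quot} and \ref{order} identify the right-hand side with $|\Iso(H_1(N;\Z_2))|=[\M(N)\colon\Gamma_2(N)]$. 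Combined with the easy inclusion $\Y(N)\subseteq\Gamma_2(N)$ this forces $\Y(N)=\Gamma_2(N)$, with no need to exhibit generators of $\calI(N)$ or to decompose individual elements of $\Gamma_2(N)$. This is where Propositions \ref{quot} and \ref{order} actually enter --- as an order comparison, not as ``bookkeeping'' for a generating set. Two further points: the low-genus cases $g=2,3,4$, where $F$ has genus at most $1$ and Corollaries \ref{gen1} and \ref{gen2} do not apply, are not merely ``to be handled separately'' --- the paper disposes of them with explicit presentations of $\M(N_3)$ and $\M(N_4)$, and you would need some such input; and for $g=2r$ the relevant subsurface is the complement of an annulus around $\beta_r$ (the cupping setup of Section \ref{s4}, which is what makes Proposition \ref{quot} applicable), not the complement of two M\"obius bands as in your decomposition.
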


In other words $\Gamma_2(N)=\Y(N)$.

\begin{proof} The crosscap slide $Y_1=Y_{\mu_1,\alpha_1}$ acts on $H_1(N;\Z)$ by
\[\overline{Y_1}(c_1)=-c_1,\quad \overline{Y_1}(c_2)=2c_1+c_2,\quad
\overline{Y_1}(c_i)=c_i\textrm{\ for\ }3\le i\le g.\]
It follows that $Y_1$  acts trivially on $H_1(N;\Z_2)$ and hence, by Lemma \ref{y1}, $\Y(N)\subseteq\Gamma_2(N)$. For the equality it suffices to prove $[\M(N)\colon\Y(N)]\le[\M(N)\colon\Gamma_2(N)]$. The index $[\M(N)\colon\Gamma_2(N)]$ is equal to the order of $\Iso(H_1(N;\Z_2))$, which is given in Proposition \ref{order}. 

If $g=2$, then $\M(N)\cong\Z_2\oplus\Z_2$ , $\Gamma_2(N)=\Y(N)\cong\Z_2$ (see \cite{Lick1}).

If $g=3$, then $\M(N)\cong\mathrm{GL}(2,\Z)$ and it admits the presentation
\[\lr{a,b,y\,|\,aba=bab, y^2=1, yay=a^{-1}, yby=b^{-1}, (aba)^4=1},\]
where $a=T_{\alpha_1}$, $b=T_{\alpha_2}$, $y=Y_1$ (see \cite{BirChill}). By Lemma \ref{y1}, a presentation for the quotient group $\M(N)/\Y(N)$ may be obtained by adding the relation $y=1$ to the presentation of $\M(N)$. Thus
\[\M(N)/\Y(N)\cong\lr{a,b\,|\,a^2=b^2=(ab)^3=1}\cong \Sigma_3,\]
where $\Sigma_n$ is the symmetric group on $n$ letters. Since  $[\M(N)\colon\Gamma_2(N)]=6$ by Proposition \ref{order}, thus $\Y(N)=\Gamma_2(N)$.

If $g=4$, then we can use the presentation of $\M(N)$ given in \cite{Szep} to obtain the following presentation for $\M(N)/\Y(N)$.
\begin{align*}
&\M(N)/\Y(N)\cong
\lr{b\,|\,b^2=1}\times\\
&\lr{a_1,a_2,a_3\,|\,a_1^2=a_2^2=a_3^2=(a_1a_2)^3=(a_2a_3)^3=1, a_1a_3=a_3a_1},
\end{align*}
where $b=\widetilde{T_{\beta_2}}$, $a_i=\widetilde{T_{\alpha_i}}$ for $i=1,2,3$, where $\widetilde{f}$ is the coset of $f\in\M(N)$ in $\M(N)/\Y(N)$. It follows that
$\M(N)/\Y(N)\cong\Z_2\times\Sigma_4$, and since  $[\M(N)\colon\Gamma_2(N)]=48$ by Proposition \ref{order}, thus $\Y(N)=\Gamma_2(N)$.

Now we assume that $g=2r+1$ for $r\ge 2$. Let $\mu$ be a one-sided simple closed curve in $N$ such that $N\backslash\mu$ is orientable, and which is disjoint from the curves $\alpha_i$, $\beta_j$, $1\le i\le g-1$, $1\le j\le r$ in Figures \ref{alphai} and \ref{betaj}. Let $M$ be a regular neighbourhood of $\mu$, which is a M\"obius band, and $S=N\backslash M$.
Note that $S$ is a copy of $S_{r,1}$. Consider a homomorphism 
$j\colon\M(S)\to\M(N)/\Y(N)$ defined as $j=p\circ i_\ast$, where $i_\ast\colon\M(S)\to\M(N)$ is induced by the inclusion $S\hookrightarrow N$ and $p\colon\M(N)\to\M(N)/\Y(N)$ is the canonical projection. By Theorem \ref{Ch}, $\M(N)$ is generated by $Y_{g-1}$ and Dehn twists about curves in $S$. It follows that $j$ is surjective and hence
\[[\M(N)\colon\Y(N)]=[\M(S)\colon\ker{j}].\]
By Corollary \ref{gen1}, $\Gamma_2(S)$ is generated by Dehn twists about separating curves, bounding pair maps and squares of Dehn twists about nonseparating curves. It follows by Lemmas \ref{square}, \ref{separating} and \ref{bpinY} that
$i_\ast(\Gamma_2(S))\subseteq\Y(N)$, hence $\Gamma_2(S)\subseteq\ker{j}$.
We have
\[[\M(S)\colon\ker{j}]\le[\M(S)\colon\Gamma_2(S)]=|\Sp(2r,\Z_2)|.\]
Since $[\M(N)\colon\Gamma_2(N)]=|\Sp(2r,\Z_2)|$ by Proposition \ref{order}, thus $\Y(N)=\Gamma_2(N)$.

Finally we assume that $g=2r$ for $r\ge 3$. Let $A$ be regular neighbourhood of the two-sided curve $\beta_r$ and $\Sigma=N\backslash A$. Note that $\Sigma$ is a copy of $S_{r-1,2}$. The proof is analogous to the case of odd genus. The homomorphism $j\colon\M(\Sigma)\to\M(N)/\Y(N)$ defined as $j=p\circ i_\ast$, where $i_\ast\colon\M(\Sigma)\to\M(N)$ is induced by the inclusion $\Sigma\hookrightarrow N$, is surjective by Theorem \ref{Ch}. By Corollary \ref{mtwist2} and Lemmas \ref{square}, \ref{separating} and \ref{bpinY} we have
$\Gamma_2(\Sigma)\subseteq\ker{j}$ and thus
\[[\M(N)\colon\Y(N)]=[\M(\Sigma)\colon\ker{j}]\le[\M(\Sigma)\colon\Gamma_2(\Sigma)]=
|\Stab_{\Sp(2r,\Z_2)}(\overline{b_r})|,\]
the last equality following from Proposition \ref{quot}. By Proposition \ref{order} we have
\[[\M(N)\colon\Gamma_2(N)]=|\Stab_{\Sp(2r,\Z_2)}(\overline{b_r})|,\] and hence $Y(N)=\Gamma_2(N)$.\end{proof}

\begin{rem}
Theorem \ref{main} provides a characterisation of the elements of $\Y(N)$ in terms of the action on homology. Such characterisation is  known also for the twist subgroup. Namely, a mapping class $f\in\M(N)$ is a product of Dehn twists about two-sided curves if and only if the determinant of the induced map
$f_\ast\colon H_1(N;\R)\to H_1(N;\R)$ equals $1$ (see \cite[Corollary 6.3]{Stu2}).
\end{rem}

The following corollary is a consequence of Theorems \ref{invol} and \ref{main}.

\begin{cor}
The group $\Gamma_2(N)$ is generated by involutions.
\end{cor}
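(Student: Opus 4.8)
The plan is to observe that this corollary requires no new work: it is the immediate juxtaposition of the two main theorems already established. Theorem \ref{main} identifies the level 2 subgroup with the group generated by crosscap slides, that is $\Gamma_2(N)=\Y(N)$. Theorem \ref{invol} asserts precisely that $\Y(N)$ is generated by involutions. Combining the two gives that $\Gamma_2(N)$ is generated by involutions, and the proof is complete in one line.

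If one wanted a more self-contained argument, one could instead unwind the proof of Theorem \ref{invol}: by Lemma \ref{y1} every crosscap slide is a product of conjugates of $Y_1$, and since $\Gamma_2(N)=\Y(N)$ is normal in $\M(N)$ (being the kernel of the action on $H_1(N;\Z_2)$), each such conjugate $hY_1h^{-1}$ lies in $\Gamma_2(N)$; writing $Y_1=R(RY_1)$ with $R,RY_1$ involutions in $\Y(N)=\Gamma_2(N)$ exhibits the generators explicitly. There is no genuine obstacle here — the content is entirely in Theorems \ref{main} and \ref{invol} — so the only point requiring care is simply citing the equality $\Gamma_2(N)=\Y(N)$ before invoking Theorem \ref{invol}. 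I expect the write-up to be a single sentence.
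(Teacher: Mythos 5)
Your proposal is correct and matches the paper exactly: the corollary is stated there as an immediate consequence of Theorems \ref{invol} and \ref{main}, with no further argument. Your one-line combination of $\Gamma_2(N)=\Y(N)$ with the fact that $\Y(N)$ is generated by involutions is precisely the intended proof.
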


Consider the quotient $R_g=H_1(N_g,\Z)/\lr{c}$, where $c=c_1+\cdots+c_g$ is the unique homology class of order 2. It is immediate from the presentation \ref{pres} that $R_g$ is the free $\Z$-module with basis given by the images of $c_1,\dots,c_{g-1}$ in $R_g$. Every automorphism of $H_1(N_g,\Z)$ preserves $c$, and thus induces an automorphism of $R_g$. It was shown in \cite[Section 2]{McCP}, that the group of automorphisms of $H_1(N_g,\Z)$ which act trivially on $H_1(N_g,\Z_2)$ is isomorphic to the full group of automorphisms of $R_g$ which act trivially on $R_g\otimes\Z_2$. Consequently, we have a surjection
\[\Gamma_2(N_g)\longrightarrow\mathrm{GL}(g-1,\Z)[2],\]
where $\mathrm{GL}(n,\Z)[2]$ is the level 2 congruence subgroup of $\mathrm{GL}(n,\Z)$. The following corollary is probably well know to algebraists.

\begin{cor}
The group $\mathrm{GL}(n,\Z)[2]$ is generated by involutions.
\end{cor}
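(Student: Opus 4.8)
The plan is to exploit the surjection $\Gamma_2(N_g)\longrightarrow\mathrm{GL}(g-1,\Z)[2]$ constructed in the paragraph above, together with the fact, recorded in the preceding corollary (a consequence of Theorems \ref{invol} and \ref{main}), that $\Gamma_2(N)$ is generated by involutions. In short, $\mathrm{GL}(n,\Z)[2]$ is a homomorphic image of a group generated by involutions, and such images are themselves generated by involutions.

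Concretely, I would first fix an arbitrary $n\ge 1$ and set $g=n+1\ge 2$, so that $N=N_g$ is a closed nonorientable surface of genus $g\ge2$ to which the discussion above applies, and the surjection becomes $\pi\colon\Gamma_2(N_{n+1})\to\mathrm{GL}(n,\Z)[2]$. Next I would invoke the preceding corollary to fix a generating set $I$ of $\Gamma_2(N_{n+1})$ consisting of involutions. Under any group homomorphism the image of an involution is either an involution or the identity, so $\pi(I)$ consists of involutions together with possibly the identity element; since $\pi$ is surjective, $\pi(I)$ generates $\mathrm{GL}(n,\Z)[2]$. As the identity is harmless among a generating set, this exhibits $\mathrm{GL}(n,\Z)[2]$ as generated by involutions.

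The only points needing a word of care are the degenerate small cases and bookkeeping of the index shift $n=g-1$: for $n=1$ one has $\mathrm{GL}(1,\Z)[2]=\{\pm1\}\cong\Z_2$, trivially generated by the involution $-1$ (and consistent with $\Gamma_2(N_2)\cong\Z_2$), and for $n\ge 2$ one must simply make sure one is quoting the McCarthy--Pinkall identification of the automorphism target correctly so that the surjection is genuinely onto all of $\mathrm{GL}(n,\Z)[2]$. Beyond that there is essentially no obstacle: the substantive work has already been done in establishing the surjection and in Theorems \ref{invol} and \ref{main}, and this corollary is a purely formal consequence.
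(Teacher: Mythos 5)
Your argument is exactly the one the paper intends: $\mathrm{GL}(n,\Z)[2]$ is a homomorphic image of $\Gamma_2(N_{n+1})$ via the McCarthy--Pinkall surjection, and a quotient of a group generated by involutions is generated by involutions (images of involutions being involutions or the identity). The small-case bookkeeping for $n=1$ is a nice touch but the approach is the same, so nothing further is needed.
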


%
%%%%%%%%%%%%%%%%%%%%%%%%%%%%%%%%%%%%%%%%%%%%%%%%%%%%%%%
%%%  Bibliography  %%%
%%%%%%%%%%%%%%%%%%%%%%%

%
%


\begin{thebibliography}{99}
%\bibitem{BMS} { H. Bass \and J. Milnor \and J.-P. Serre}. Solution of the congruence subgroup problem for $\mathrm{SL}_n$ ($n\ge 3$) and $\mathrm{Sp}_{2n}$ ($n\ge 2$). \emph{Inst. Hautes \'Etudes Sci. Publ. Math.} \textbf{33} (1967), 59--137.
%
\bibitem{Bir} { J. S. Birman}. On Siegel's modular group. \emph{Math. Ann. } \textbf{191} (1971), 59--68.
%
\bibitem{BirChill} { J. S. Birman \and D. R. J. Chillingworth}. On the homeotopy group of a non-orientable surface. \emph{Proc. Camb. Phil. Soc.} \textbf{71} (1972), 437--448.
%
\bibitem{Chill} { D. R. J. Chillingworth}. A finite set of generators for the homeotopy group of a non-orientable
surface. \emph{Proc. Camb. Phil. Soc.} \textbf{65} (1969), 409-430.
%
\bibitem{Dehn} { M. Dehn}. Papers on group theory and topology, Translated from the German and with introductions and an appendix by John Stillwell, Springer--Verlag, New York, 1987.
%
\bibitem{John1} { D. L. Johnson}. The structure of the Torelli group I: A finite set of generators for $\calI$. \emph{Ann. of Math.} \textbf{118} (1983) 423--442.
%
\bibitem{John2} { D. L. Johnson}. The structure of the Torelli group II: A Characterisation of the group generated by twists on bounding curves. \emph{Topology} \textbf{24} (1985) 113--126.
%
\bibitem{John3} { D. L. Johnson}. The structure of the Torelli group III: The abelianization of $\calI$. \emph{Topology} \textbf{24} (1985) 127--144.
%
\bibitem{KorkHom} { M. Korkmaz}. First homology group of mapping class group of nonorientable surfaces.
\emph{Math. Proc. Camb. Phil. Soc.} \textbf{123} (1998), 487-499.
%
\bibitem{Kork} { M. Korkmaz}. Mapping class groups of nonorientable surfaces.
\emph{Geom. Dedicata} \textbf{89} (2002), 109-133.
%
\bibitem{Lick0} { W. B. R. Lickorish}. A finite set of generators for the homeotopy group of a 2-manifold.
\emph{Proc. Camb. Phil. Soc.} \textbf{60} (1964), 769--778.
%
\bibitem{Lick1} { W. B. R. Lickorish}. Homeomorphisms of non-orientable two-manifolds.
\emph{Proc. Camb. Phil. Soc.} \textbf{59} (1963), 307--317.
%
\bibitem{Lick2} { W. B. R. Lickorish}. On the homeomorphisms of a non-orientable surface.
\emph{Proc. Camb. Phil. Soc.} \textbf{61} (1965), 61--64.
%
\bibitem{McCP} { J. D. McCarthy \and U. Pinkall}. Representing homology automorphisms of nonorientable surfaces. Max Planc Inst. preprint MPI/SFB 85-11, revised version written in 2004. Available at  \url{http://www.math.msu.edu/~mccarthy}.
%
\bibitem{Now} {T. Nowik}. Immersions of non-orientable surfaces. \emph{Topology Appl.} \textbf{154} (2007),
1881--1893.
%
\bibitem{Powell} { J. Powell}. Two theorems on the mapping class group of a surface. \emph{Proc. Amer. Math. Soc.} \textbf{68} (1978), 347--350.
%
\bibitem{Put} { A. Putman}. Cutting and pasting in the Torelli group. \emph{Geom. Topol.} \textbf{11} (2007), 829--865.
%
\bibitem{Sato} { M. Sato}. The abelianization of the level 2 mapping class group. \emph{J. Topol.} \textbf{3} (2010), 847--882. .

\bibitem{Stu1} { M. Stukow}. Dehn twists on nonorientable surfaces. \emph{ Fund. Math.} \textbf{189} (2006), 117--147.
%
\bibitem{Stu2} { M. Stukow}. The twist subgroup of the mapping class group a nonorientable surface. \emph{Osaka J. Math}. \textbf{46} (2009), 717--738.
%
\bibitem{Stu4} { M. Stukow}. Commensurability of geometric subgroups of mapping class groups. \emph{Geom. Dedicata} \textbf{143} (2009), 117--142.
%
\bibitem{Stu3} { M. Stukow}. Generating mapping class groups of nonorientable
surfaces with boundary. \emph{Adv. Geom.} \textbf{10} (2010), 249--273.
%
\bibitem{Szep} { B. Szepietowski}. A presentation for the mapping class group of the closed non-orientable surface of genus 4. \emph{J. Pure Appl. Algebra} \textbf{213} (2009), 2001-2016.
%%
\end{thebibliography}
\end{document}